\newtheorem{Lemma}{Lemma}[section]
\newtheorem{Theorem}[Lemma]{Theorem}
\newtheorem{Proposition}[Lemma]{Proposition}
\newtheorem{Corollary}[Lemma]{Corollary}
\theoremstyle{definition}
\newtheorem{Definition}[Lemma]{Definition}
\newtheorem{Conjecture}[Lemma]{Conjecture}
\newtheorem{Remark}[Lemma]{Remark}
\newtheorem{Example}[Lemma]{Example}
\numberwithin{equation}{section}
\DeclareMathOperator{\quot}{Quot}
\newcommand{\cR}{R^\circ}
\DeclareMathOperator{\md}{mod}
\DeclareMathOperator{\End}{End}
\title{Frieze patterns over algebraic numbers}
\author{Michael~Cuntz}
\address{Michael Cuntz, Leibniz Universit\"at Hannover,
Institut f\"ur Algebra, Zahlentheorie und Diskrete Mathematik,
Fakult\"at f\"ur Mathematik und Physik,
Welfengarten 1,
D-30167 Hannover, Germany}
\email{cuntz@math.uni-hannover.de}
\urladdr{https://www.iazd.uni-hannover.de/de/cuntz}
\author{Thorsten~Holm}
\address{Thorsten Holm, Leibniz Universit\"at Hannover,
Institut f\"ur Algebra, Zahlentheorie und Diskrete Mathematik,
Fakult\"at f\"ur Mathematik und Physik,
Welfengarten 1,
D-30167 Hannover, Germany}
\email{holm@math.uni-hannover.de}
\urladdr{https://www.iazd.uni-hannover.de/de/holm}
\author{Carlo~Pagano}
\address{Carlo Pagano, Concordia University, Department of Mathematics and Statistics, Montreal, Quebec H3G 1M8, Canada}
\email{carlo.pagano@concordia.ca}
\urladdr{https://sites.google.com/view/carlopagano}
\keywords{frieze pattern, quiddity cycle, ring of integers, quadratic number field, orders in number fields}
\subjclass[2020]{05E99, 11R04, 11R11, 13F60}
\begin{document}

\begin{abstract}
Conway and Coxeter have shown that frieze patterns over positive rational integers are in bijection with triangulations of polygons. An investigation of frieze patterns over other subsets of the complex numbers has recently been initiated by J{\o}rgensen and the first two authors. In this paper we first show that a ring of algebraic numbers has finitely many units if and only if it is an order in a quadratic number field $\mathbb{Q}(\sqrt{d})$ where $d<0$. We conclude that these are exactly the rings of algebraic numbers over which there are finitely many non-zero frieze patterns for any given height. We then show that apart from the cases $d\in \{-1,-2,-3,-7,-11\}$ all non-zero frieze patterns over the rings of integers $\mathcal{O}_d$ for $d<0$ have only integral entries and hence are known as (twisted) Conway-Coxeter frieze patterns.
\end{abstract}

\maketitle

\section{Introduction}

Frieze patterns are arrays of numbers introduced by Coxeter \cite{Cox71} (for the definition see Section \ref{sec:algnum}).
They are closely connected to Fomin and Zelevinsky's cluster algebras of Dynkin type $A$.
This connection to cluster algebras is one of the main reasons for frieze patterns being an active topic of research, linking different areas like combinatorics, geometry, and representation theory; see the survey \cite{MG15}.  

Soon after Coxeter defined frieze patterns, Conway and Coxeter \cite{CC73} gave a beautiful characterization of frieze patterns over the positive rational integers: such frieze patterns of height $n$ are in bijection with triangulations of a regular $(n+3)$-gon. 
It is an obvious question whether this observation of Conway and Coxeter could be generalized from triangulations to other dissections of polygons. About fourty years later, several variations were proposed.

In \cite{CH17} we give a combinatorial model for frieze patterns over the rational integers (including negative numbers and $0$) and suggest to consider further subsets of the complex numbers. This extends the case of frieze patterns over $\mathbb{Z}\setminus\{0\}$ which was also considered in \cite{F14}.
Our combinatorial model consists of triangles and quadrangles plus a combinatorial recipe to determine the entries of the corresponding frieze patterns from the dissection.

In \cite{HJ17} the authors define for any integer $p\ge 3$ the notion of a frieze pattern of type $\Lambda_p$: a frieze pattern (with positive real numbers as entries) is of type $\Lambda_p$ if the quiddity sequence consists of (positive) integral multiples of the number $\lambda_p=2\cos(\frac{\pi}{p})$. 
Note that for $p=3$ we have $\lambda_3= 1$ and frieze patterns of type $\Lambda_3$ are precisely the Conway-Coxeter frieze patterns (over positive rational integers). Then the classic result by Conway and Coxter can be generalized (see \cite[Theorem A]{HJ17}):
{\em There is a bijection between $p$-angulations of the $(n+3)$-gon and frieze patterns of type $\Lambda_p$ with height $n$.}
One can even go a bit further: to any dissection of a polygon one can associate a frieze pattern with entries in $\mathcal{O}_K$, the ring of algebraic integers of the number field $K=\mathbb{Q}(\lambda_ {p_1},\ldots, \lambda_{p_s})$, where $p_1,\ldots,p_s$ are the sizes of the subpolygons in the dissection. This yields an injection from polygon dissections of the $(n+3)$-gon to frieze patterns with height $n$ (see \cite[Theorem B]{HJ17}).

In \cite{Cuntz-comb}, a notion of irreducible frieze pattern was introduced. Using this notion one obtains combinatorial models for the set of frieze patterns with entries in arbitrary subsets of a commutative ring. The set of irreducible frieze patterns for a given subset is infinite in most cases, but it turns out that all the previously known combinatorial models arise in this way.
\medskip

One of the most fundamental questions in the theory of frieze patterns is whether over a given set of numbers there are finitely or infinitely many non-zero frieze patterns for any given height. Usually, one can only expect a nice combinatorial model when the number of frieze patterns in each height is finite.
In \cite{CH17} the following results are shown:
\begin{enumerate}
\item If $R\subseteq\mathbb{C}$ is a discrete subset, then there are only finitely many non-zero frieze patterns over $R$ of height $n$ for each $n\in\mathbb{N}$ (see
\cite[Corollary 3.8]{CH17}).
\item Let $R\subseteq\mathbb{C}$ be a subset containing infinitely many divisors of $2$. Then for each $n>0$ there are infinitely many non-zero frieze patterns of height $n$ over $R$ (see \cite[Proposition 3.9]{CH17}).
\end{enumerate}

It is known that a subring $R$ of the complex numbers forms a discrete subset if and only if $R$ is contained in the ring of integers $\mathcal{O}_d$ of an imaginary quadratic number field $\mathbb{Q}(\sqrt{d})$ for $d<0$ (see Proposition \ref{discreteQd}).
So according to (1) above these rings of integers will lead to finitely many non-zero frieze patterns in each height.
On the other hand, any ring of integers $\mathcal{O}_d$ of a real quadratic field $\mathbb{Q}(\sqrt{d})$ with $d>0$ contains infinitely many units by Dirichlet's unit theorem. Hence by (2) above, there are infinitely many non-zero frieze patterns in each positive height over $\mathcal{O}_d$ for any $d>0$.

One of the aims of this paper is to deduce for many more subrings $R$ of the complex numbers whether or not there are finitely or infinitely many non-zero frieze patterns.
It turns out that the situation becomes very different as soon as transcendental numbers appear in a frieze pattern, therefore we concentrate on subrings of the ring $\overline{\mathbb{Q}}$ of algebraic numbers.

Our first main result shows that having finitely many non-zero frieze patterns in each height restricts the possible subrings.
This theorem will be restated and proven as Theorem~\ref{thm:Rcircv2} below.

\begin{Theorem} \label{thm:Rcirc}
Let $R\le \mathbb{C}$ be a subring and let $\cR$ be the subring of $R$ generated by all entries of all non-zero frieze patterns over $R$.
If $\cR\subseteq \overline{\mathbb{Q}}$, then there are finitely many non-zero frieze patterns over $R$ in each positive height if and only if
\begin{enumerate}
 \item[{(i)}] $\cR=\mathbb{Z}$\quad or
 \item[{(ii)}] $\cR$ is an order in $\mathbb{Q}(\sqrt{d})$ with $d\in \{-1,-2,-3,-7,-11\}$.
\end{enumerate}
\end{Theorem}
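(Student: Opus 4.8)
The plan is to reduce the whole statement to the ring $\cR$ itself and then combine the two main theorems of this paper with the results of \cite{CH17} recalled above. The key preliminary observation is that, directly from the definition of $\cR$, every entry of every non-zero frieze pattern over $R$ lies in $\cR$; since moreover $\cR\subseteq R$, the non-zero frieze patterns over $R$ of any fixed height $n$ coincide with the non-zero frieze patterns over $\cR$ of height $n$. Hence ``finitely many non-zero frieze patterns over $R$ in each positive height'' is equivalent to the same statement for $\cR$, and from now on I would work with $\cR$, using the hypothesis $\cR\subseteq\overline{\mathbb{Q}}$.

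For the implication ``$\Leftarrow$'', observe that in both cases (i) and (ii) the ring $\cR$ is a discrete subset of $\mathbb{C}$: the ring $\mathbb{Z}$ is discrete, and if $\cR$ is an order in $\mathbb{Q}(\sqrt d)$ with $d<0$ then $\cR\subseteq\mathcal{O}_d$, which is discrete by Proposition~\ref{discreteQd}, so $\cR$ is discrete as well. Then \cite[Corollary~3.8]{CH17} yields finitely many non-zero frieze patterns over $\cR$ of each height, hence over $R$.

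For ``$\Rightarrow$'', assume there are only finitely many non-zero frieze patterns over $R$ -- equivalently over $\cR$ -- in each positive height, in particular in height $1$. By the contrapositive of \cite[Proposition~3.9]{CH17}, $\cR$ then contains only finitely many divisors of $2$. Every unit $u$ of $\cR$ divides $2$, since $u\cdot(2u^{-1})=2$ and $2u^{-1}\in\cR$; therefore $\cR$ has only finitely many units. As $\cR\subseteq\overline{\mathbb{Q}}$, our classification of rings of algebraic numbers with finitely many units forces $\cR$ to be either $\mathbb{Z}$, which is case~(i), or an order in an imaginary quadratic field $\mathbb{Q}(\sqrt d)$ with $d<0$. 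In the latter case, suppose $d\notin\{-1,-2,-3,-7,-11\}$. Then $\cR\subseteq\mathcal{O}_d$, so every non-zero frieze pattern over $\cR$ is a non-zero frieze pattern over $\mathcal{O}_d$ and hence, by our theorem on frieze patterns over $\mathcal{O}_d$ for such $d$, has only integral entries. Consequently every entry of every non-zero frieze pattern over $R$ lies in $\mathbb{Z}$, whence $\cR=\mathbb{Z}$ by its definition -- contradicting that an order in the quadratic field $\mathbb{Q}(\sqrt d)$ is a free $\mathbb{Z}$-module of rank $2$. Thus $d\in\{-1,-2,-3,-7,-11\}$, which is case~(ii), as desired.

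The genuine difficulty is not in this argument but in the two ingredients it rests on: the characterisation of rings of algebraic numbers with finitely many units, and the integrality of all non-zero frieze patterns over $\mathcal{O}_d$ for $d\notin\{-1,-2,-3,-7,-11\}$. Once both are available, the deduction above is routine, the only points needing a little care being the reduction from $R$ to $\cR$ and the elementary remark that every unit divides $2$; so I expect the main obstacle to lie entirely in establishing those two theorems.
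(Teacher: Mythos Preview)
Your proof is correct and follows essentially the same route as the paper's: reduce from $R$ to $\cR$, use the unit theorem (Theorem~\ref{thmunits}) for the forward direction together with Theorem~\ref{thm:imaginary} to rule out the remaining negative $d$'s, and invoke the discreteness result \cite[Corollary~3.8]{CH17} for the converse. Your treatment is marginally cleaner in two places---handling both cases of the converse uniformly via discreteness rather than citing Conway--Coxeter/Fontaine separately for $\mathbb{Z}$, and phrasing the exclusion of $d\notin\{-1,-2,-3,-7,-11\}$ as an explicit contradiction---but the underlying argument is identical.
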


\noindent
In order to prove Theorem \ref{thm:Rcirc}, we need the following purely number theoretic result on the number of units in rings of algebraic numbers:
\begin{Theorem}[Theorem \ref{thmunits}]
Let $R\le \overline{\mathbb{Q}}$ be a subring with finitely many units. Then $R=\mathbb{Z}$ or there exists an integer $d\in\mathbb{Z}_{<0}$ such that $R$ is an order in $\mathbb{Q}(\sqrt{d})$.
\end{Theorem}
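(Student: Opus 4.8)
The plan is to reduce the theorem to the following claim about individual elements: \emph{if $\beta\in\overline{\mathbb{Q}}$ is such that the ring $\mathbb{Z}[\beta]$ has only finitely many units, then $\beta$ is an algebraic integer and $\mathbb{Q}(\beta)$ is either $\mathbb{Q}$ or an imaginary quadratic field.} Granting this, one argues as follows. For every $\beta\in R$ we have $\mathbb{Z}[\beta]\subseteq R$, so $\mathbb{Z}[\beta]^{\times}$ embeds into $R^{\times}$, which is finite by hypothesis; hence the claim applies to every element of $R$. Thus $R$ consists of algebraic integers, and each $\beta\in R$ generates over $\mathbb{Q}$ either $\mathbb{Q}$ or an imaginary quadratic field. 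If $R\subseteq\mathbb{Q}$, then $R\subseteq\mathbb{Q}\cap\overline{\mathbb{Z}}=\mathbb{Z}$, hence $R=\mathbb{Z}$. Otherwise (see the third paragraph) all elements of $R\setminus\mathbb{Q}$ generate one and the same imaginary quadratic field $K_{0}$; then $R\subseteq\mathcal{O}_{K_{0}}$, and since $\mathcal{O}_{K_{0}}$ is a finitely generated module over the Noetherian ring $\mathbb{Z}$, so is its submodule $R$. Choosing any $\beta\in R\setminus\mathbb{Q}$ shows $\mathbb{Q}\otimes_{\mathbb{Z}}R=\mathbb{Q}(\beta)=K_{0}$, so $R$ is a finitely generated $\mathbb{Z}$-module of full rank inside $K_{0}=\mathbb{Q}(\sqrt{d})$ with $d<0$, i.e.\ an order in $\mathbb{Q}(\sqrt{d})$, which is exactly the assertion.

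To prove the claim, set $A:=\mathbb{Z}[\beta]$, $L:=\mathbb{Q}(\beta)=\operatorname{Frac}(A)$, and let $\widetilde{A}$ be the integral closure of $A$ in $L$. Since $\widetilde{A}$ is the intersection of all valuation rings of $L$ containing $A$, and the valuation rings of $L$ other than $L$ itself are exactly the localizations $(\mathcal{O}_{L})_{\mathfrak{p}}$ at nonzero primes $\mathfrak{p}$ of $\mathcal{O}_{L}$, which contain $A$ precisely when $v_{\mathfrak{p}}(\beta)\ge 0$, we get $\widetilde{A}=\bigcap_{\mathfrak{p}\,:\,v_{\mathfrak{p}}(\beta)\ge 0}(\mathcal{O}_{L})_{\mathfrak{p}}=\mathcal{O}_{L,S}$, the ring of $S$-integers of $L$ for the \emph{finite} set $S:=\{\mathfrak{p}:v_{\mathfrak{p}}(\beta)<0\}$ of primes of $\mathcal{O}_{L}$; note $S=\varnothing$ if and only if $\beta\in\mathcal{O}_{L}$. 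Because $A$ is a finitely generated $\mathbb{Z}$-algebra, the standard finiteness theorem for integral closures makes $\widetilde{A}$ a finitely generated $A$-module, so there is $0\ne d\in A$ with $d\,\widetilde{A}\subseteq A$; and $\widetilde{A}/d\widetilde{A}$ is finite because $\widetilde{A}$ is a Dedekind domain with finite residue fields. The kernel $U$ of $\widetilde{A}^{\times}\to(\widetilde{A}/d\widetilde{A})^{\times}$ therefore has finite index in $\widetilde{A}^{\times}$, and every $u\in U$ satisfies $u-1\in d\widetilde{A}\subseteq A$ and, since $U$ is a group, $u^{-1}-1\in d\widetilde{A}\subseteq A$ as well, so $u\in A^{\times}$. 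Consequently $A^{\times}$ finite forces $U$, hence $\widetilde{A}^{\times}=\mathcal{O}_{L,S}^{\times}$, to be finite. By Dirichlet's $S$-unit theorem, $\operatorname{rank}\mathcal{O}_{L,S}^{\times}=r_{1}(L)+r_{2}(L)-1+|S|=0$, which forces $|S|=0$, so $\beta\in\mathcal{O}_{L}$ is an algebraic integer, and $r_{1}(L)+r_{2}(L)=1$, so $L=\mathbb{Q}$ or $L$ is imaginary quadratic.

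For the remaining ``same field'' step, suppose $\beta_{1},\beta_{2}\in R\setminus\mathbb{Q}$ generate distinct imaginary quadratic fields $\mathbb{Q}(\sqrt{d_{1}})\ne\mathbb{Q}(\sqrt{d_{2}})$, with $d_{1},d_{2}$ distinct squarefree integers, so $\beta_{i}=a_{i}+b_{i}\sqrt{d_{i}}$ with $a_{i},b_{i}\in\mathbb{Q}$ and $b_{i}\ne 0$. In the degree-$4$ field $\mathbb{Q}(\sqrt{d_{1}},\sqrt{d_{2}})$, with $\mathbb{Q}$-basis $\{1,\sqrt{d_{1}},\sqrt{d_{2}},\sqrt{d_{1}d_{2}}\}$, the element $\beta_{1}+\beta_{2}\in R$ has nonzero coordinates along both $\sqrt{d_{1}}$ and $\sqrt{d_{2}}$; hence it lies in none of $\mathbb{Q}$, $\mathbb{Q}(\sqrt{d_{1}})$, $\mathbb{Q}(\sqrt{d_{2}})$, $\mathbb{Q}(\sqrt{d_{1}d_{2}})$, which are all the proper subfields, so $\mathbb{Q}(\beta_{1}+\beta_{2})$ has degree $4$, contradicting the claim applied to $\beta_{1}+\beta_{2}\in R$. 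Therefore, if $R\not\subseteq\mathbb{Q}$, a common imaginary quadratic field $K_{0}$ exists, and $R\subseteq K_{0}$ since every element of $R$ lies in $\mathbb{Q}\subseteq K_{0}$ or generates $K_{0}$.

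The main obstacle is the element-wise claim, and within it the implication \emph{``$\beta$ not an algebraic integer $\Rightarrow$ $\mathbb{Z}[\beta]$ has infinitely many units''}. The ingredients — that $\widetilde{A}$ is the $S$-integer ring $\mathcal{O}_{L,S}$ with $S\ne\varnothing$ whenever $\beta\notin\mathcal{O}_{L}$, that $\widetilde{A}$ is module-finite over $A$ so that a nonzero conductor exists, and that the units congruent to $1$ modulo the conductor form a finite-index subgroup of $\widetilde{A}^{\times}$ lying inside $A^{\times}$ — are each standard but have to be combined carefully; once in place, Dirichlet's $S$-unit theorem closes the argument. The two remaining steps (the ``same field'' lemma and the identification of $R$ as an order) are then elementary.
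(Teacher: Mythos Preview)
Your argument is correct and takes a genuinely different route from the paper's. The paper proceeds in two separate stages: first (Proposition~\ref{Sdeg}) it shows $\quot(R)$ is $\mathbb{Q}$ or imaginary quadratic by embedding an order $\mathbb{Z}[\vartheta]\subseteq R$ and applying the ordinary Dirichlet unit theorem; then (Proposition~\ref{prop:unit}) it shows $R\subseteq\mathcal{O}_K$ by passing to $R':=R\cdot\mathcal{O}_K$, taking an element $\alpha\in R'\setminus\mathcal{O}_K$, using finiteness of the class group to write $\alpha^h=\gamma_1/\gamma_2$ with $\gamma_1,\gamma_2\in\mathcal{O}_K$ coprime, and exhibiting $1/\gamma_2$ as an explicit unit of infinite order. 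Your approach instead works element by element: for each $\beta\in R$ you identify the integral closure of $\mathbb{Z}[\beta]$ as the $S$-integer ring $\mathcal{O}_{L,S}$ with $S=\{\mathfrak{p}:v_{\mathfrak{p}}(\beta)<0\}$, transfer finiteness of units along the conductor, and invoke the $S$-unit theorem to force simultaneously $|S|=0$ (so $\beta$ is integral) and $r_1+r_2=1$ (so $\mathbb{Q}(\beta)$ is $\mathbb{Q}$ or imaginary quadratic); a short ``same field'' lemma then pins down a single $K_0$. Both arguments share the conductor/finite-index comparison of unit groups, but the paper's version is more constructive (it actually produces units, as illustrated in the example following Proposition~\ref{prop:unit}), whereas yours is more economical: the $S$-unit theorem absorbs the class-group step and handles integrality and the degree bound in one stroke, at the cost of invoking the standard finiteness of integral closure for finitely generated $\mathbb{Z}$-algebras.
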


Theorem \ref{thm:Rcirc} shows that the imaginary quadratic fields and their rings of integers play a special role when studying non-zero frieze patterns over arbitrary number fields.

Clearly, every frieze pattern over the rational integers $\mathbb{Z}$ is also a frieze pattern over $R$ for any subring $R\le \mathbb{C}$. 
The non-zero frieze patterns over $\mathbb{Z}$ are known. First of all there are the frieze patterns over positive rational integers considered by Conway and Coxeter. 
When also allowing negative integers as entries, not very many further frieze patterns occur (see \cite{F14}): every non-zero frieze pattern over $\mathbb{Z}$ is either a Conway-Coxeter frieze pattern or can be obtained from a Conway-Coxeter frieze pattern by multiplying every second diagonal by $-1$ (which only works if the height $n$ is odd). We call the latter {\em twisted Conway-Coxeter frieze patterns}.
In Section \ref{sec:reduction} we give a self-contained proof of this fact, using different methods than in \cite{F14}.
In particular, there are only finitely many non-zero frieze patterns over the rational integers.

For understanding frieze patterns over rings of integers in quadratic number fields the fundamental question is how many new such frieze patterns appear in addition to the non-zero frieze patterns over $\mathbb{Z}$ already known. As our second main result we can answer this question for almost all rings of integers in imaginary quadratic number fields.
This theorem will be restated and proven as Theorem \ref{thm:imaginary} below.

\begin{Theorem}\label{thmint_3}
Let $d$ be a negative square-free integer. Then
\[ \mathcal{O}_d^{\circ} = \begin{cases}
 \mathcal{O}_d & \text{if } d\in \{-1,-2,-3,-7,-11\}, \\
 \mathbb{Z} & \text{else}.
\end{cases} 
\]
In other words,
for any negative square-free integer $d\not\in \{-1,-2,-3,-7,-11\}$ 
the only non-zero frieze patterns over $\mathcal{O}_d$ are the Conway-Coxeter frieze patterns and the related twisted Conway-Coxeter frieze patterns.
\end{Theorem}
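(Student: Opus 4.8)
The Conway--Coxeter frieze patterns have positive integer entries and are in particular frieze patterns over $\mathcal{O}_d$, so one always has $\mathbb{Z}\subseteq\mathcal{O}_d^{\circ}$. The theorem thus splits into two assertions: (i) for $d\in\{-1,-2,-3,-7,-11\}$, \emph{every} element of $\mathcal{O}_d$ occurs as an entry of some non-zero frieze pattern over $\mathcal{O}_d$; and (ii) for every other negative square-free $d$, \emph{every} non-zero frieze pattern over $\mathcal{O}_d$ has all its entries in $\mathbb{Z}$, whence by the classification of non-zero frieze patterns over $\mathbb{Z}$ obtained in Section~\ref{sec:reduction} (see also \cite{F14}) it is a Conway--Coxeter or twisted Conway--Coxeter pattern and $\mathcal{O}_d^{\circ}=\mathbb{Z}$. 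Throughout I use that a frieze pattern is encoded by its quiddity cycle $(a_1,\dots,a_m)$ via $\eta(a_1)\cdots\eta(a_m)=-I$ with $\eta(a)=\bigl(\begin{smallmatrix}a&-1\\1&0\end{smallmatrix}\bigr)$, that the quiddity entries are themselves entries of the pattern (so they are non-zero when the pattern is), and that all entries of the pattern are continuant polynomials in the quiddity entries.

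\emph{The five exceptional $d$.} It suffices to realise each $\alpha\in\mathcal{O}_d$ as a quiddity entry of a non-zero pattern. For exactly these five values $\mathcal{O}_d$ is norm-Euclidean --- the same five for which, by the classical theorem of P.\,M.~Cohn, $\mathrm{SL}_2(\mathcal{O}_d)$ is generated by elementary matrices. Using $\eta(0)^2=-I$ and $\eta(c)^{-1}=\eta(0)\,\eta(-c)\,\eta(0)$ one sees that in that case the multiplicative monoid generated by $\{\eta(c):c\in\mathcal{O}_d\}$ is all of $\mathrm{SL}_2(\mathcal{O}_d)$; in particular $-\eta(\alpha)^{-1}=\eta(c_1)\cdots\eta(c_k)$ for suitable $c_j\in\mathcal{O}_d$, which makes $(\alpha,c_1,\dots,c_k)$ a quiddity cycle. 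Choosing the expansion of $-\eta(\alpha)^{-1}$ along the Euclidean algorithm and keeping track that no continuant vanishes produces a \emph{non-zero} frieze pattern over $\mathcal{O}_d$ through $\alpha$. (In the easiest cases this is explicit: whenever $\alpha$ divides $2$ in $\mathcal{O}_d$ --- which, taking $\alpha$ a suitable module generator, already covers $d\in\{-1,-2,-3,-7\}$ --- the length-$4$ cycle $(\alpha,\,2/\alpha,\,\alpha,\,2/\alpha)$ works; for $d=-11$, where the generator $\tfrac{1+\sqrt{-11}}{2}$ has norm $3$, one more Euclidean step is needed.) As the realised elements together with $\mathbb{Z}$ generate $\mathcal{O}_d$, we get $\mathcal{O}_d^{\circ}=\mathcal{O}_d$.

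\emph{The remaining $d$.} Here I would combine two ingredients. The first is arithmetic: for every negative square-free $d\notin\{-1,-2,-3,-7,-11\}$, every $a\in\mathcal{O}_d$ with norm $N(a)\le3$ lies in $\{0,\pm1\}$. Indeed if $d\equiv2,3\!\pmod4$ then $\mathcal{O}_d=\mathbb{Z}[\sqrt d]$ and $N(x+y\sqrt d)=x^2+|d|\,y^2\ge|d|\ge5$ as soon as $y\neq0$ (the only excluded values with $|d|<5$ are $d=-1,-2$); if $d\equiv1\!\pmod4$ then $\mathcal{O}_d=\mathbb{Z}[\tfrac{1+\sqrt d}{2}]$ and a non-rational element has norm at least $|d|/4>3$, since the smallest admissible $d$ here is $-15$; either way $N(a)\le3$ forces $a\in\mathbb{Z}$, hence $a\in\{0,\pm1\}$. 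The second is the reduction procedure of Section~\ref{sec:reduction}, which I would run in the slightly enlarged class of non-zero patterns allowing the relation $\eta(a_1)\cdots\eta(a_m)=\pm I$: such a pattern of quiddity length $m\ge4$ has a quiddity entry of absolute value $<2$, and at such an entry --- which over our $\mathcal{O}_d$ must equal $1$ or $-1$, as $0$ cannot be a quiddity entry of a non-zero pattern --- the cycle can be shortened to length $m-1$ via $\eta(a)\eta(1)\eta(b)=\eta(a{-}1)\eta(b{-}1)$ (if the entry is $1$) or $\eta(a)\eta(-1)\eta(b)=-\eta(a{+}1)\eta(b{+}1)$ (if it is $-1$), the result again being a non-zero pattern over $\mathcal{O}_d$ in this class. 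Combining the two, I argue by induction on $m$ that every such pattern over $\mathcal{O}_d$ has all entries in $\mathbb{Z}$: for $m\le3$ this holds by inspection ($(1,1,1)$ and $(-1,-1,-1)$ being the only possibilities), and for $m\ge4$ one shortens at a quiddity entry in $\{1,-1\}$, applies the inductive hypothesis to the shorter pattern, and reads the move backwards --- it reintroduces only the integer $\pm1$ and alters two neighbouring entries by $\pm1$ --- to conclude that the original pattern is integral too. In particular every non-zero frieze pattern over $\mathcal{O}_d$ is integral, which by Section~\ref{sec:reduction} finishes (ii).

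The main obstacle I anticipate is the last paragraph: putting in place the reduction lemma of Section~\ref{sec:reduction} --- that a non-zero pattern of quiddity length $\ge4$ always has a quiddity entry of absolute value $<2$ --- together with the accompanying bookkeeping, namely that the shortening moves stay within the class of non-zero patterns over $\mathcal{O}_d$ (non-vanishing of all continuants being preserved, e.g.\ because a non-zero pattern of length $\ge4$ has no two consecutive equal entries $\pm1$), that the sign in the matrix relation is tracked correctly when a $-1$ is removed, and that, together with the observation that $N(a)\le3$ leaves only $\{0,\pm1\}$ in $\mathcal{O}_d$ for all but the five special $d$, integrality of the entries is inherited in both directions of the reduction. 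The exceptional-$d$ direction is comparatively routine; its only mildly delicate point is verifying non-vanishing of all frieze entries along the Euclidean construction, with $d=-11$ the least transparent instance.
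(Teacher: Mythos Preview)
Your overall strategy matches the paper's: exhibit non-integral friezes for the five exceptional $d$, and for the rest reduce every quiddity cycle over $\mathcal{O}_d$ to a short integral one via the $\eta$-identities. The executions differ in two places, and one of them contains a real gap.

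For the exceptional $d$, the paper takes the minimal route: since $\mathcal{O}_d^{\circ}$ is a ring containing $\mathbb{Z}$ and $\mathcal{O}_d=\mathbb{Z}[\omega_d]$, it is enough to exhibit a single non-zero frieze whose entries include $\omega_d$. The paper simply lists explicit quiddity cycles (length $4$ for $d\in\{-1,-2,-3,-7\}$, length $6$ for $d=-11$) and checks the resulting friezes are non-zero. Your appeal to Cohn's theorem and the Euclidean algorithm realises \emph{every} $\alpha\in\mathcal{O}_d$ as a quiddity entry, which is more than needed and leaves you with the non-vanishing verification you yourself flag as ``mildly delicate''.

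For the remaining $d$, the arithmetic input (elements of $\mathcal{O}_d$ with $|a|<2$, equivalently $N(a)\le 3$, lie in $\{0,\pm1\}$) is exactly the paper's. The divergence is in how the reduction is organised. You try to keep non-zero-ness alive at every step so as to exclude $0$ as a small entry, remove a single $\pm1$, and allow the product to flip to $+I$. The paper makes no such claim about the intermediate cycles --- it explicitly footnotes that it does \emph{not} know they yield non-zero friezes. Instead it uses the stronger fact (Lemma~\ref{lem:small}(c), not Section~\ref{sec:reduction}) that a quiddity cycle of length $>3$ has \emph{two non-neighbouring} entries of absolute value $<2$, hence in $\{0,\pm1\}$: if a $1$ is present it is removed; otherwise two entries from $\{0,-1\}$ are removed in one step, so the signs in $\eta(a)\eta(-1)\eta(b)=-\eta(a{+}1)\eta(b{+}1)$ and $\eta(a)\eta(0)\eta(b)=-\eta(a{+}b)$ cancel and one stays in the $-I$ class. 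Either way every move only adds or subtracts $1$ or sums two existing entries, so reversing the reduction shows the original quiddity is integral; then $\mathcal{O}_d\cap\mathbb{Q}=\mathbb{Z}$ forces all frieze entries into $\mathbb{Z}$.

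The gap in your version is the sentence ``the result again being a non-zero pattern\ldots\ e.g.\ because a non-zero pattern of length $\ge4$ has no two consecutive equal entries $\pm1$''. That observation only rules out zeros among the two \emph{new quiddity entries}; it says nothing about the remaining continuants of the shortened frieze. The claim you need is actually true --- one can check from the $\eta$-identities that every entry of the reduced pattern equals $\pm$(an entry of the original) --- but it requires its own argument; the reason you give does not establish it. The paper's pairing trick avoids this bookkeeping entirely, at the price of needing the ``two non-neighbouring small entries'' refinement.
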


In the cases of frieze patterns over $\mathcal{O}_d$ for $d\in \{-1,-2,-3,-7,-11\}$ it seems to be a hard problem to classify all non-zero frieze patterns and each of these cases might need different ideas.
After posting our preprint to the arxiv, it has been stated in \cite{FKST23} that Theorem \ref{thmint_3} could also be obtained by a geometric argument.

\section*{Acknowledgements} While completing this work, the third author has been supported by a Riemann fellowship at Hannover University, to whom goes his gratitutude for the financial support and the great hospitality.

\section{Subrings of $\overline{\mathbb{Q}}$ with finitely many units}\label{sec:subrings}

In this section we give a proof of:

\begin{Theorem}\label{thmunits}
Let $R\le \overline{\mathbb{Q}}$ be a subring with finitely many units. Then $R=\mathbb{Z}$ or there exists an integer $d\in\mathbb{Z}_{<0}$ such that $R$ is an order in $\mathbb{Q}(\sqrt{d})$.
\end{Theorem}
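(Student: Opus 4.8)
The plan is to show that a subring $R \le \overline{\mathbb{Q}}$ with finitely many units must be "small" in two senses: it must sit inside an imaginary quadratic field (or be $\mathbb{Z}$), and it must be finitely generated as a $\mathbb{Z}$-module. First I would observe that $R$ is an integral domain of characteristic $0$, so $\mathbb{Z} \subseteq R$, and its field of fractions $K = \operatorname{Frac}(R)$ is a subfield of $\overline{\mathbb{Q}}$. The key dichotomy is on $[K:\mathbb{Q}]$. If $K = \mathbb{Q}$, then $R$ is a subring of $\mathbb{Q}$; such a subring is a localization $\mathbb{Z}[S^{-1}]$, and unless $S$ consists only of units (forcing $R = \mathbb{Z}$) it contains $1/p$ for some prime $p$, hence the infinitely many units $\pm p^n$. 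So $K = \mathbb{Q}$ forces $R = \mathbb{Z}$.

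Next suppose $[K:\mathbb{Q}] \ge 2$. The crucial claim is that $K$ must then be an imaginary quadratic field. I would argue by contradiction: if $K$ is \emph{not} contained in an imaginary quadratic field, I want to produce a subfield $F \subseteq K$ that is either a real field of degree $\ge 2$ or an imaginary quadratic field, such that $R \cap F$ is an order in $F$ — and then invoke Dirichlet's unit theorem to say that the unit group of that order (hence of $R$) is infinite. The obstacle here is that $R \cap F$ need not obviously be a full order; $R$ could a priori be a strange non-finitely-generated subring. To handle this I would first establish that $R$ is integral over $\mathbb{Z}$: if some $\alpha \in R$ were not an algebraic integer, then some prime $p$ appears in the denominator of the minimal polynomial of $\alpha$, and one shows (this is the technical heart) that $R$ then contains $1/p$ or more generally a unit of infinite order — concretely, Kronecker's theorem gives that a non-root-of-unity algebraic number has a conjugate of absolute value $>1$, and one leverages the norm/trace relations to extract units, or one directly shows $\mathbb{Z}[\alpha]$ or a localization thereof lands in $R$ with infinitely many units. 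Once $R \subseteq \mathcal{O}_K$, finiteness of $R^\times$ combined with Dirichlet forces the unit rank of $\mathcal{O}_K$ to be $0$ \emph{only if} $R$ has finite index in a rank-zero order; but more carefully: if $\operatorname{rank} \mathcal{O}_K^\times > 0$ we need $R$ to miss those units, and a subring missing a finite-index worth of units is still finite-index (a subring of $\mathcal{O}_K$ of full fraction field $K$ contains a power of every element, so if $\mathcal{O}_K^\times$ is infinite then $R^\times$ is infinite). Hence $\operatorname{rank}\mathcal{O}_K^\times = 0$, which by Dirichlet means $K = \mathbb{Q}$ or $K$ imaginary quadratic.

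The remaining step, given $K = \mathbb{Q}(\sqrt{d})$ imaginary quadratic, is to show $R$ is an \emph{order} in $K$, i.e. a subring that is a free $\mathbb{Z}$-module of rank $2$. Since $R \subseteq \mathcal{O}_K$ and $\mathcal{O}_K$ is a free $\mathbb{Z}$-module of rank $2$, $R$ is automatically a finitely generated torsion-free $\mathbb{Z}$-module, hence free of rank $1$ or $2$. Rank $1$ would mean $R \subseteq \mathbb{Q}$, contradicting $[K:\mathbb{Q}] = 2 = [\operatorname{Frac}(R):\mathbb{Q}]$; so $R$ has rank $2$ and is by definition an order in $K$. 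This finishes the proof.

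I expect the main obstacle to be the step proving $R \subseteq \mathcal{O}_K$, i.e. ruling out non-integral elements: one must convert "a prime $p$ divides the denominator of some $\alpha \in R$" into "$R$ has infinitely many units", and the cleanest route is probably to note that then $R$ contains a subring of the form $\mathbb{Z}[1/p, \text{something}]$ or to use a height/Mahler-measure argument (Kronecker) showing that localizing at $p$ produces elements of absolute value bounded away from $1$ in every archimedean and the $p$-adic place, forcing an infinite-order unit via the product formula; alternatively one reduces to the one-variable case by passing to $\mathbb{Z}[\alpha] \subseteq R$ and analyzing its unit group directly. This is the only place where real work beyond bookkeeping with Dirichlet's unit theorem is needed.
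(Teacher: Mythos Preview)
Your outline has the same three building blocks as the paper --- restrict the fraction field via Dirichlet, show integrality, and conclude that $R$ is an order --- but you run them in an order that creates a circularity, and the one step you flag as ``the technical heart'' is precisely where a concrete argument is missing.

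First, the ordering. You propose to prove $R\subseteq\mathcal{O}_K$ \emph{before} knowing that $K=\operatorname{Frac}(R)$ is a number field, and then to invoke Dirichlet on $\mathcal{O}_K$. But Dirichlet's unit theorem needs $K$ finite over $\mathbb{Q}$, and your sketch for integrality also implicitly works inside a number field. The paper reverses this: it first shows (Proposition~\ref{Sdeg}) that $K$ is $\mathbb{Q}$ or imaginary quadratic, by picking any $\vartheta\in R$ with $\mathbb{Q}(\vartheta)=K$ (or, if $[K:\mathbb{Q}]=\infty$, finitely many $\mathbb{Q}$-independent elements of $R$), scaling to an algebraic integer, and applying Dirichlet to the order $\mathbb{Z}[\vartheta]\subseteq R$. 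No integrality of $R$ is needed for this step. Your aside ``a subring of $\mathcal{O}_K$ of full fraction field $K$ contains a power of every element'' is not literally true for arbitrary elements; what is true (and what the paper proves as Proposition~\ref{finite index}(b)) is the finite-index statement for \emph{units}, via the conductor ideal.

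Second, the integrality step. Your suggestions --- Kronecker's theorem, Mahler measure, the product formula --- do not bite directly: Kronecker is a statement about algebraic integers all of whose conjugates lie on the unit circle, not about non-integral elements, and the product-formula sketch is not an argument. The paper's route is quite different and worth knowing. Working now inside an imaginary quadratic $K$, it enlarges $R$ to $R'=R\cdot\mathcal{O}_K$ and shows (Propositions~\ref{finite index} and~\ref{more finite}) that $R$ has finite index in $R'$, so $R'^\times$ is still finite. If some $\alpha\in R'\setminus\mathcal{O}_K$, factor the fractional ideal $(\alpha)$ in $\mathcal{O}_K$; by finiteness of the class group, a power $\alpha^h=\gamma_1/\gamma_2$ with $\gamma_1,\gamma_2\in\mathcal{O}_K$ coprime and $\gamma_2\notin\mathcal{O}_K^\times$. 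A B\'ezout relation $x\gamma_1+y\gamma_2=1$ then forces $1/\gamma_2=x\alpha^h+y\in R'$, so $\gamma_2\in R'^\times$ has infinite order (positive valuation at some prime). This is the missing idea in your proposal: class-group finiteness plus coprimality is what converts ``$\alpha$ has a denominator'' into ``$R'$ contains the inverse of a non-unit of $\mathcal{O}_K$''.
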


Let us, as a first approximation, show that a subring of $\overline{\mathbb{Q}}$ with finitely many units is contained in an imaginary quadratic field.

\begin{Proposition}\label{Sdeg}
Let $R\le\overline{\mathbb{Q}}$ be a subring and denote by $\quot(R)$ the field of fractions of $R$.
If $R$ has finitely many units, then $\quot(R)$ is either $\mathbb{Q}$ or an imaginary quadratic field.
\end{Proposition}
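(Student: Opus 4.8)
The plan is to deduce the statement from Dirichlet's unit theorem, applied not to $R$ itself (which need not be finitely generated, and whose field of fractions could a priori be an infinite extension of $\mathbb{Q}$) but to a well-chosen \emph{order} inside a number subfield of $\quot(R)$. The enabling observation is: if $\beta_1,\dots,\beta_r\in R$ and $L=\mathbb{Q}(\beta_1,\dots,\beta_r)$, then $R\cap\mathcal{O}_L$ is an order in $L$. Indeed, $R\cap\mathcal{O}_L$ is a subring of $\mathcal{O}_L$, hence (as a $\mathbb{Z}$-submodule of the finitely generated $\mathbb{Z}$-module $\mathcal{O}_L$) a finitely generated $\mathbb{Z}$-module; and choosing $c_i\in\mathbb{Z}_{>0}$ with $c_i\beta_i\in\mathcal{O}_L$ one gets $\mathbb{Z}[c_1\beta_1,\dots,c_r\beta_r]\subseteq R\cap\mathcal{O}_L$, a subring whose field of fractions is already all of $L$. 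Thus $\quot(R\cap\mathcal{O}_L)=L$, so $R\cap\mathcal{O}_L$ has $\mathbb{Z}$-rank $[L:\mathbb{Q}]$ and is an order in $L$.

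Arguing by contradiction, suppose $\quot(R)$ is neither $\mathbb{Q}$ nor an imaginary quadratic field. I would first produce a number field $L\subseteq\quot(R)$ that is generated over $\mathbb{Q}$ by finitely many elements of $R$ and is itself neither $\mathbb{Q}$ nor imaginary quadratic. Choose $\alpha\in R\setminus\mathbb{Q}$; this is possible, since $R\subseteq\mathbb{Q}$ would force $\quot(R)=\mathbb{Q}$. If $\mathbb{Q}(\alpha)$ is not imaginary quadratic, set $L=\mathbb{Q}(\alpha)$. Otherwise $\mathbb{Q}(\alpha)$ is imaginary quadratic, hence $\mathbb{Q}(\alpha)\subsetneq\quot(R)$ (as $\quot(R)$ is not imaginary quadratic), so $R\not\subseteq\mathbb{Q}(\alpha)$ and there is $\gamma\in R\setminus\mathbb{Q}(\alpha)$; then $L=\mathbb{Q}(\alpha,\gamma)$ has degree $\geq 3$ over $\mathbb{Q}$, so it is neither $\mathbb{Q}$ nor imaginary quadratic. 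Note that this works regardless of whether $\quot(R)$ is finite over $\mathbb{Q}$.

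To finish, recall that the number fields $L$ with $r_1(L)+r_2(L)=1$ are precisely $\mathbb{Q}$ and the imaginary quadratic fields; hence the field $L$ just constructed satisfies $r_1(L)+r_2(L)\geq 2$. By Dirichlet's unit theorem---an order in a number field $L$ has finite index in $\mathcal{O}_L$ and therefore a unit group of the same rank $r_1(L)+r_2(L)-1$---the group $(R\cap\mathcal{O}_L)^\times$ has rank $\geq 1$ and is infinite. Since every unit of $R\cap\mathcal{O}_L$ is a unit of $R$, this contradicts the hypothesis that $R$ has finitely many units, and the proof is complete.

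I expect the first, structural, step to be the crux. The naive moves---working with $\mathbb{Z}[\beta_1,\dots,\beta_r]$, or with $R$ directly---both fail: the former need not be a finitely generated $\mathbb{Z}$-module when the $\beta_i$ are not algebraic integers, and the latter is too large to apply a rank argument to. Recognising that $R\cap\mathcal{O}_L$ is simultaneously finitely generated over $\mathbb{Z}$ and of full rank in $L$ is exactly what lets the unit theorem for orders take over; the remaining steps (choosing $L$, and invoking Dirichlet) are then routine.
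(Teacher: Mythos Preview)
Your proof is correct and follows essentially the same strategy as the paper: locate inside $R$ an order in a number field $L$ that is neither $\mathbb{Q}$ nor imaginary quadratic, then invoke Dirichlet's unit theorem for orders. The only difference is the specific order chosen---you take $R\cap\mathcal{O}_L$, while the paper scales a primitive element $\vartheta_0$ of $L$ by an integer $N$ so that $\vartheta:=N\vartheta_0\in R$ and uses $\mathbb{Z}[\vartheta]$.
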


\begin{proof}
We can assume that the field extension $\quot(R)/\mathbb{Q}$ has finite degree; in fact if $\quot(R)/\mathbb{Q}$ has infinite degree, choose more than two $\mathbb{Q}$-linearly independent elements $\alpha_1,\ldots,\alpha_n$ of $R$ and replace $R$ by $\mathbb{Z}[\alpha_1,\ldots,\alpha_n]\le R$ in the argument.

By the primitive element theorem, we have an element $\vartheta_0$ in $\quot(R)$ such that $\mathbb{Q}(\vartheta_0)=\quot(R)$. Observe that $\frac{\quot(R)}{R}$ is a torsion group. Hence by multiplying by a sufficiently large integer $N$, we find that $\vartheta:=N\cdot \vartheta_0$ is both in $R$ and $\mathbb{Z}[\vartheta]$ is an order in $\quot(R)$. 

Dirichlet's unit theorem (see \cite[Thm.\ 12.12]{Neukirch} for a version including orders) states that the rank of the group of units of $\mathbb{Z}[\vartheta]$ is $r+s-1$, where $r$ is the number of real embeddings and $s$ the number of conjugate pairs of complex embeddings of $\mathbb{Q}(\vartheta)=\quot(R)$.

If $r+2s=[\mathbb{Q}(\vartheta):\mathbb{Q}]>2$, then $r+s-1>0$, thus the group of units $R^\times$ is infinite because it contains $\mathbb{Z}[\vartheta]^\times$.
On the other hand, if $\mathbb{Q}(\vartheta)$ is real quadratic, then $r=2$, $s=0$ and $r+s-1>0$ as well. This gives the desired conclusion.
\end{proof}

Before concluding the proof of Theorem \ref{thmunits},
let us prove two simple facts. All rings here are commutative with unit. Given two subrings $R_1,R_2$ of a ring $R_3$, we denote by $R_1 \cdot R_2$ the ring generated by the two rings in $R_3$. It is nothing else than the natural image of the map
$$R_1 \otimes_{\mathbb{Z}} R_2 \to R_3,
$$
given by the bilinear form obtained by the multiplication on $R_3$. 
\begin{Proposition} \label{finite index}
$(a)$ Let $R_1,R_2$ be two subrings of $R_3$. Suppose that $R_1 \cap R_2$ has finite index in $R_2$. Suppose furthermore that for every positive integer $n$, we have that $R_1/nR_1$ is finite. Then $R_1$ has finite index in $R_1 \cdot R_2$. \\
$(b)$ Suppose that $R_1 \subseteq R_2$ is an inclusion of rings with finite index (as abelian groups). Then $R_1^{\times} \subseteq R_2^{\times}$ has also finite index (as abelian groups).
\begin{proof}
For part $(a)$. The natural map
$$R_1 \otimes_{\mathbb{Z}} R_2 \twoheadrightarrow \frac{R_1 \cdot R_2}{R_1}
$$
factors through
$$R_1 \otimes_{\mathbb{Z}}\frac{R_2}{R_1 \cap R_2} \twoheadrightarrow \frac{R_1 \cdot R_2}{R_1}.
$$
Now the second factor is a finite abelian group by assumption. Let $n$ be its order. Then the map further factors through
$$\frac{R_1}{nR_1} \otimes_{\mathbb{Z}}\frac{R_2}{R_1 \cap R_2} \twoheadrightarrow \frac{R_1 \cdot R_2}{R_1}.
$$
Now the source is a finite abelian group and so also the target must be finite, as desired. 

For part $(b)$. Observe that the abelian group $R_2/R_1$ is naturally an $R_1$-module. Denote by $J$ the kernel of the natural map $R_1 \to \End_{\text{ab.gr.}}(R_2/R_1)$. It is clearly an ideal of $R_1$. But as a matter of fact it is already an ideal of $R_2$. Indeed $rJR_2=JrR_2 \subseteq R_1$, for each $r$ in $R_2$. Furthermore $J$ has finite index in $R_1$ (the endomorphism ring of a finite abelian group is certainly finite), which has finite index in $R_2$. It follows that $J$ is an ideal of $R_2$ with finite index. Hence we have that the subgroup
$$\text{ker}(R_2^{\times} \to (R_2/J)^{\times})
$$
has also finite index in $R_2^{\times}$ (the target group is the unit group of a finite ring). But by construction this consists of elements of the form $1+j$ with $j$ in $J$ and hence of $R_1^{\times}$ (the inverse of $1+j$ is a priori in $R_2$, but it has to be still in $1+J$ as one can see reducing $R_2$ modulo $J$, therefore the inverse is also in $R_1$), which therefore in particular must have finite index (since a subgroup does), as desired. 
\end{proof}
\end{Proposition}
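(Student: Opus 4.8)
For part $(a)$, the plan is to use the description of $R_1 \cdot R_2$ as the image of the multiplication map $R_1 \otimes_{\mathbb{Z}} R_2 \to R_3$, and to show that after projecting onto the quotient $(R_1\cdot R_2)/R_1$ this map factors through a finite abelian group. Composing the multiplication map with the projection gives a surjection $R_1 \otimes_{\mathbb{Z}} R_2 \twoheadrightarrow (R_1\cdot R_2)/R_1$. First I would note that because $R_1$ is closed under multiplication we have $R_1\cdot(R_1\cap R_2)\subseteq R_1$, so the surjection descends to $R_1 \otimes_{\mathbb{Z}} (R_2/(R_1\cap R_2))$. By hypothesis the second factor is finite; letting $n$ be its exponent, the map descends further to $(R_1/nR_1)\otimes_{\mathbb{Z}}(R_2/(R_1\cap R_2))$. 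Both factors are now finite abelian groups --- the first by the standing assumption that $R_1/nR_1$ is finite --- so their tensor product over $\mathbb{Z}$ is finite, and its image $(R_1\cdot R_2)/R_1$ is finite, as wanted.

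For part $(b)$, the idea is to manufacture an ideal $J$ that is common to $R_1$ and $R_2$ and of finite index, and then deduce the statement by reduction modulo $J$. I would take $J$ to be the kernel of the action map $R_1\to\End_{\text{ab.gr.}}(R_2/R_1)$, that is, the largest ideal of $R_2$ contained in $R_1$. Although $J$ is visibly an ideal of $R_1$, the key verification is that it is in fact an ideal of the larger ring $R_2$: for $r\in R_2$ one checks $rJ\subseteq R_1$ and $rJ\cdot R_2\subseteq R_1$, so $rJ$ again kills $R_2/R_1$ and hence $rJ\subseteq J$. Since $R_1/J$ embeds into the finite ring $\End_{\text{ab.gr.}}(R_2/R_1)$, the ideal $J$ has finite index in $R_1$, and together with the finite index of $R_1$ in $R_2$ this makes $R_2/J$ a finite ring.

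The conclusion of $(b)$ then follows from reduction modulo $J$. The homomorphism $R_2^{\times}\to(R_2/J)^{\times}$ has finite target, namely the unit group of a finite ring, so its kernel has finite index in $R_2^{\times}$. This kernel consists of units of the shape $1+j$ with $j\in J$; each such element lies in $R_1$, and its inverse --- a priori only known to lie in $R_2$ --- reduces to $1$ modulo $J$ and hence lies in $1+J\subseteq R_1$. Thus the kernel is contained in $R_1^{\times}$, and since it already has finite index in $R_2^{\times}$, so does the larger subgroup $R_1^{\times}$.

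I expect the main obstacle to be, in both parts, pinning down the precise finite quotient through which the argument factors. In $(a)$ the subtle step is confirming that the multiplication map legitimately descends to the double quotient $(R_1/nR_1)\otimes_{\mathbb{Z}}(R_2/(R_1\cap R_2))$, which crucially uses that $R_1$ is a subring (not merely a subgroup) so that the intersection $R_1\cap R_2$ is absorbed. In $(b)$ the crux is recognizing that the conductor $J$ is an ideal of $R_2$ rather than merely of $R_1$ --- this is exactly what promotes $R_2/J$ to a finite ring and lets the unit-group comparison run --- together with the fact that being congruent to $1$ modulo $J$ is preserved under inversion, which is what keeps the inverse inside $R_1$.
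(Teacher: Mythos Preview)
Your proposal is correct and follows essentially the same approach as the paper's own proof: in part $(a)$ both arguments factor the multiplication map $R_1\otimes_{\mathbb{Z}}R_2\twoheadrightarrow (R_1\cdot R_2)/R_1$ first through $R_1\otimes_{\mathbb{Z}}(R_2/(R_1\cap R_2))$ and then through $(R_1/nR_1)\otimes_{\mathbb{Z}}(R_2/(R_1\cap R_2))$, and in part $(b)$ both use the conductor ideal $J=\ker\bigl(R_1\to\End_{\text{ab.gr.}}(R_2/R_1)\bigr)$, verify that it is an ideal of $R_2$, and compare unit groups via reduction modulo $J$. The only cosmetic differences are that you take $n$ to be the exponent rather than the order of $R_2/(R_1\cap R_2)$, and you spell out the verification that $rJ\subseteq J$ a bit more explicitly.
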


\begin{Proposition} \label{more finite}
Let $R$ be a subring of a number field $L$. Let $n$ be a positive integer. Then $R/nR$ is finite.
\end{Proposition}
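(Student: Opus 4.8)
The plan is to show $R/nR$ is finite by reducing to the case where $n$ is a prime and then exploiting that $R$ embeds into the finite-dimensional $\mathbb{Q}$-vector space $L$. Write $m=[L:\mathbb{Q}]$. First, by the Chinese Remainder Theorem it suffices to handle prime powers $n=p^e$: for distinct primes the ideals $p^eR$ and $q^fR$ are comaximal (as $\gcd(p^e,q^f)=1$ in $\mathbb{Z}$), so $R/nR\cong\prod_{p^e\parallel n}R/p^eR$. Next, since $R$ is an integral domain (a subring of the field $L$) and $p\neq 0$ in $R$, multiplication by $p^i$ induces isomorphisms $R/pR\xrightarrow{\sim}p^iR/p^{i+1}R$ for all $i\ge 0$; running through the filtration $R\supseteq pR\supseteq\cdots\supseteq p^eR$ then gives $\lvert R/p^eR\rvert=\lvert R/pR\rvert^{e}$. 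Hence it is enough to prove that $R/pR$ is finite.

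For this I would show $\dim_{\mathbb{F}_p}R/pR\le m$. Take any $m+1$ elements $x_1,\dots,x_{m+1}\in R$; it suffices to produce a nontrivial $\mathbb{F}_p$-linear relation among their images in $R/pR$. Regarded as elements of the $m$-dimensional $\mathbb{Q}$-vector space $L$, the $x_i$ are $\mathbb{Q}$-linearly dependent; clearing denominators in such a relation and then dividing through by the greatest common divisor of the resulting integers yields an identity $\sum_{i=1}^{m+1}a_ix_i=0$ with $a_i\in\mathbb{Z}$ and $\gcd(a_1,\dots,a_{m+1})=1$. Since the gcd is $1$, the prime $p$ does not divide every $a_i$, so reducing this identity modulo $pR$ gives a nontrivial linear relation over $\mathbb{F}_p=\mathbb{Z}/p\mathbb{Z}$ among the images of the $x_i$. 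Thus $\dim_{\mathbb{F}_p}R/pR\le m$, so $\lvert R/pR\rvert\le p^{m}$, and combining the steps, $\lvert R/nR\rvert\le n^{m}<\infty$.

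I do not anticipate a genuine obstacle here; the argument is elementary linear algebra together with standard comaximality bookkeeping. The one point that needs care is the normalization by the gcd in the second paragraph: one must divide out the common factor so that the relation does not become trivial modulo $p$. This is also why it is convenient to reduce to prime powers first, since for a composite modulus $n$ an integer relation with $\gcd(a_i)=1$ need not have any coefficient that is invertible modulo $n$, whereas modulo a prime power coprimality to $p$ immediately gives invertibility.
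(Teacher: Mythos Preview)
Your proof is correct and takes essentially the same approach as the paper: reduce to the case of a prime $p$ and then bound $\dim_{\mathbb{F}_p} R/pR$ by $m=[L:\mathbb{Q}]$ using that $R$ sits inside the finite-dimensional $\mathbb{Q}$-vector space $L$. The only cosmetic difference is that the paper phrases the dimension bound contrapositively (if $m+1$ elements were independent in $R/pR$, the subgroup they generate in $R$ would have rank exceeding $m$), whereas you construct the $\mathbb{F}_p$-relation directly via a gcd-normalized integer relation; your reduction to primes via CRT and the filtration $p^iR/p^{i+1}R\cong R/pR$ is also spelled out more explicitly than the paper's one-line remark.
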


\begin{proof}
For an abelian group $A$, the property that $A/nA$ is finite for all $n$ is equivalent to the same property over primes $n:=p$. Now suppose that $A$ is a torsion free abelian group, such that all its finitely generated subgroups have rank at most $d$, for a uniform positive integer $d$. Then we claim that $A/pA$ must be finite for all prime number $p$, and more precisely it must have dimension at most $d$, for each $p$. Suppose not. Then we can find $d+1$ elements $a_1, \ldots,a_{d+1}$ in $A$, which are linearly independent vectors in $A/pA$. Let now $H:=\langle a_1, \ldots, a_{d+1} \rangle$ be the group generated by these elements. It must be that $H/pH$ is a $d+1$-dimensional vector space, since its image in $A/pA$ is already $d+1$-dimensional. It follows that the finitely generated group $H$ surjects onto $\mathbb{F}_p^{d+1}$, hence its rank has to be at least $d+1$, a contradiction. This gives the desired conclusion for any such $A$.

We now reach the desired conclusion for $R$, since $R$ is a subgroup of a finite dimensional $\mathbb{Q}$-vector space, $L$. And if $d$ denotes the dimension of this space then clearly every finitely generated subgroup of $L$ must have rank at most $d$. This ends the proof. 
\end{proof}

To conclude the proof of Theorem \ref{thmunits} we have the following. 
\begin{Proposition}\label{prop:unit}
Let $R$ be a subring of a number field $L$. Suppose that $R^{\times}$ is finite. Then $R$ is contained in $\mathcal{O}_K$ for some imaginary quadratic field $K$ inside $L$.
\end{Proposition}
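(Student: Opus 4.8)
The plan is to use the two auxiliary propositions just proved to reduce to the case where $R$ already contains a maximal order, and then to exploit that $\mathcal{O}_K$ is a Dedekind domain. First I set $K:=\quot(R)$; since $R\subseteq L$ and $L$ is a field, $K\subseteq L$, and by Proposition~\ref{Sdeg} either $K=\mathbb{Q}$ or $K$ is imaginary quadratic. The case $K=\mathbb{Q}$ is immediate: a subring of $\mathbb{Q}$ in which some prime $p$ is invertible contains the infinitely many units $p,p^2,p^3,\dots$, so $R=\mathbb{Z}$, which sits inside $\mathcal{O}_K$ for any imaginary quadratic $K$ (this is the exceptional case of Theorem~\ref{thmunits}). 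So assume from now on that $K$ is imaginary quadratic, with ring of integers $\mathcal{O}_K$, a Dedekind domain that is free of rank $2$ over $\mathbb{Z}$.

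Next I would check that $R\cap\mathcal{O}_K$ is an order in $K$. It is a subring containing $1$; it is finitely generated as a $\mathbb{Z}$-module, being a subgroup of $\mathcal{O}_K\cong\mathbb{Z}^2$; and it contains a $\mathbb{Q}$-basis of $K$, since $\quot(R)=K$ yields some $\beta\in R$ with $\{1,\beta\}$ a $\mathbb{Q}$-basis, and clearing denominators in the minimal polynomial of $\beta$ (which has degree $\le 2$) produces an integer $N>0$ with $N\beta\in\mathcal{O}_K$, while $N\beta\in R$ as well. In particular $R\cap\mathcal{O}_K$ and $\mathcal{O}_K$ are both free $\mathbb{Z}$-modules of rank $2$ with the same $\mathbb{Q}$-span, so $R\cap\mathcal{O}_K$ has finite index in $\mathcal{O}_K$. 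Now apply Proposition~\ref{finite index}$(a)$ with $R_1=R$, $R_2=\mathcal{O}_K$ inside $R_3=K$: its two hypotheses are exactly that $R\cap\mathcal{O}_K$ has finite index in $\mathcal{O}_K$ (just shown) and that $R/nR$ is finite for every $n$ (Proposition~\ref{more finite}, applied to the number field $K$). This gives that $R$ has finite index in $R':=R\cdot\mathcal{O}_K$, and then Proposition~\ref{finite index}$(b)$ shows that $R^\times$ has finite index in $(R')^\times$; since $R^\times$ is finite, so is $(R')^\times$.

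It remains to prove $R'=\mathcal{O}_K$, which gives $R\subseteq R'=\mathcal{O}_K$ and finishes the proof. Here $\mathcal{O}_K\subseteq R'\subseteq K$. Suppose $R'\supsetneq\mathcal{O}_K$ and pick $\alpha\in R'\setminus\mathcal{O}_K$. Since $\mathcal{O}_K$ is a Dedekind domain and $\alpha\notin\mathcal{O}_K$, the denominator ideal $\mathfrak{d}=\{x\in\mathcal{O}_K:x\alpha\in\mathcal{O}_K\}$ is a proper integral ideal, and $\alpha\mathcal{O}_K=\mathfrak{a}\mathfrak{d}^{-1}$ with $\mathfrak{a},\mathfrak{d}$ coprime integral ideals, $\mathfrak{d}\ne\mathcal{O}_K$. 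Letting $h$ be the class number of $K$, the ideal $\mathfrak{d}^h=(\delta)$ is principal for some $\delta\in\mathcal{O}_K$ with $|\mathrm{Nm}_{K/\mathbb{Q}}(\delta)|=\mathrm{Nm}(\mathfrak{d})^h>1$, so $\delta$ is not a unit of $\mathcal{O}_K$, hence not a root of unity. From $\delta\alpha^h\mathcal{O}_K=\mathfrak{a}^h\subseteq\mathcal{O}_K$ and $\mathfrak{a}^h+\mathfrak{d}^h=\mathcal{O}_K$ we obtain $z,w\in\mathcal{O}_K$ with $\delta(\alpha^h z+w)=1$, so $\delta^{-1}=\alpha^h z+w\in\mathcal{O}_K[\alpha]\subseteq R'$; together with $\delta\in\mathcal{O}_K\subseteq R'$ this gives $\delta\in(R')^\times$, whence $\delta,\delta^2,\delta^3,\dots$ are pairwise distinct units of $R'$, contradicting the finiteness of $(R')^\times$. (Alternatively one may invoke the classification of rings between a Dedekind domain and its fraction field as localizations, and note that a non-unit of $\mathcal{O}_K$ that becomes invertible in $R'$ already produces infinitely many units.)

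The main obstacle is the reduction carried out in the second paragraph rather than the Dedekind argument itself: $R$ need not contain $\mathcal{O}_K$, and orders in $K$ behave badly with respect to ideal factorization, so the clean localization/Dedekind reasoning is not available for $R$ directly. Enlarging $R$ to $R'=R\cdot\mathcal{O}_K$ is what makes it available, and the role of Propositions~\ref{finite index} and~\ref{more finite} is precisely to guarantee that this enlargement is harmless — it preserves the finiteness of the unit group — so that the contradiction in the last paragraph applies.
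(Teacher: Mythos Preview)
Your argument is correct and follows essentially the same route as the paper: reduce via Proposition~\ref{Sdeg} to $\quot(R)$ imaginary quadratic, use Propositions~\ref{finite index} and~\ref{more finite} to pass from $R$ to $R'=R\cdot\mathcal{O}_K$ without losing finiteness of the unit group, and then derive a contradiction from an $\alpha\in R'\setminus\mathcal{O}_K$ by raising to the class number and producing, via a B\'ezout identity, a non-root-of-unity $\delta\in(R')^\times$. The only cosmetic differences are that you organise the case split through $K=\quot(R)$, handle the rational case by the direct observation that an invertible prime yields infinitely many units (the paper instead says ``replicate the same argument with $h=1$''), and phrase the Dedekind step via the denominator ideal and its $h$-th power generator $\delta$ rather than the paper's coprime decomposition $\alpha^h=\gamma_1/\gamma_2$; these are the same computation in different notation.
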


\begin{proof}
Thanks to Proposition \ref{Sdeg}, we already know that there must be $K \subseteq L$ a sub-extension that is imaginary quadratic and contains $R$. Hence we are reduced to proving the statement in the case that $K=L$ is an imaginary quadratic field. 

Let us first assume that $R$ is not a subring of $\mathbb{Q}$. This means that $R \cap \mathcal{O}_K$ is a finite index subring of $\mathcal{O}_K$. It follows that $R':=R \cdot \mathcal{O}_K$ contains $R$ as a finite index subring thanks to Proposition \ref{finite index} combined with Proposition \ref{more finite}. It follows that $\#R'^{\times}<\infty$, since $R^{\times}$ has finite index therein, by Proposition \ref{finite index}, and it is finite by assumption. Hence we are reduced to prove the desired conclusion for $R'$, which, this time, contains $\mathcal{O}_K$. Suppose by contradiction that there exists $\alpha$ in $R'$ which is not contained in $\mathcal{O}_K$. Let us factor the principal $\mathcal{O}_K$-module $(\alpha)$ in prime factors
$$(\alpha)=\prod_{\mathfrak{p} \in S^{+}}\mathfrak{p}^{n(\mathfrak{p})} \cdot \prod_{\mathfrak{q} \in S^{-}}\mathfrak{q}^{n(\mathfrak{q})},
$$
where for all $\mathfrak{p}$ in $S^{+}$ we have that $n(\mathfrak{p})>0$ and for all $\mathfrak{q}$ in $S^{-}$ we have that $n(\mathfrak{q})<0$. The assumption that $\alpha$ is not in $\mathcal{O}_K$ is equivalent to saying that $S^{-}$ is not empty. 

Now, since the class group of $\mathcal{O}_K$ is finite (\cite[Theorem 6.3]{Neukirch}), we can raise $\alpha^h$ for some positive integer $h$, in such a way that each ideal in the expression becomes principal. We conclude that we can write
$$\alpha^h:=\frac{\gamma_1}{\gamma_2},
$$
with $\gamma_1,\gamma_2$ coprime elements of $\mathcal{O}_K$ (and hence also of $R'$) such that $\gamma_2$ has positive valuation at some prime. Since $\gamma_1,\gamma_2$ are coprime, we can find $x,y$ in $\mathcal{O}_K$ such that
$$1=x\gamma_1+y\gamma_2.
$$
It follows that $x\alpha^h+y=\frac{1}{\gamma_2}$, where the left hand side is visibly in $R'$. Hence we have that $\frac{1}{\gamma_2}$ is an element of $R'$, and therefore 
$$\gamma_2 \in {R'}^{\times},
$$
is an element of infinite order, since its image under the valuation map at one place has infinite order. This gives the desired conclusion. 

We are left with the case where $R$ is contained in $\mathbb{Q}$, where we replicate the same argument as above, this time with $h=1$. This ends the proof.
\end{proof}

\begin{Example}
Let $d=-13$, $\tau:=\sqrt{d}$, $\alpha=(-2+5\tau)/{47}$ and consider the ring $R=\mathbb{Z}[\alpha]$.
We compute a unit in $R$ of infinite order following the proof of Proposition \ref{prop:unit}.
Denote $\mathcal{O}$ the maximal order in $\quot(R)$. Then in $\mathcal{O}$ we have the following factorizations of ideals into prime ideals:
\[
(-2+5 \tau) = (7,\: 1+\tau)\cdot (47,\: 9+\tau), \quad (47) = (47,\: 38+\tau)\cdot (47,\: 9+\tau).
\]
Thus we have
\[ (\alpha) = (7,\: 1+\tau) \cdot (47,\: 38+\tau)^{-1}. \]
With $h=2$ we get principal ideals
\[ (7,\: 1+\tau)^2 = (6-\tau), \quad (47,\: 38+\tau)^2 = (-34+9\tau), \quad \alpha^h = \frac{6-\tau}{-34+9\tau} = \frac{\gamma_1}{\gamma_2}, \]
and indeed, $\gamma_1=6-\tau$ and $\gamma_2=-34+9\tau$ are coprime in $\mathcal{O}$.
The element $1/\gamma_2$ is a unit in $R\cdot \mathcal{O}$ but not in $R$. Since $R$ has finite index in $R\cdot \mathcal{O}$, it suffices to compute powers of $1/\gamma_2$:
the third power of $1/\gamma_2$ is equal to $(68102-21735\tau)/47^6$ which turns out to be a unit in $R$:
$$ (68102+21735\tau) \cdot (68102-21735\tau)/47^6 = 1 $$
is an equality in $\mathbb{Z}[\alpha]$ since
\begin{eqnarray*}
- 68102-21735\tau &=& 5900521\alpha^6+12400457\alpha^5+2969, \\
( - 68102+21735\tau)/47^6 &=& -1215601\alpha^6-339167\alpha^5-4689.
\end{eqnarray*}
The norms of these elements are not $1$; they have infinite order in the group $R^\times$.
\end{Example}

\section{Frieze patterns and algebraic numbers}\label{sec:algnum}

In this section we collect some of the fundamental definitions in the theory of frieze patterns.
We then study in detail frieze patterns over algebraic number fields. As a main result we obtain in this section a proof of Theorem \ref{thm:Rcirc} from the introduction.

\begin{Definition} \label{def:friezepattern}
A \emph{non-zero frieze pattern} $\mathcal{C}$ of height $n$ over a subset $R\subseteq K$ of a field $K$ is an infinite array of the form
\[
\begin{array}{ccccccccccc}
 & & \ddots & & & &\ddots  & & & \\
0 & 1 & c_{i-1,i+1} & c_{i-1,i+2} & \cdots & \cdots & c_{i-1,n+i} & 1 & 0 & & \\
& 0 & 1 & c_{i,i+2} & c_{i,i+3} & \cdots & \cdots & c_{i,n+i+1} & 1 & 0 & \\
& & 0 & 1 & c_{i+1,i+3} & c_{i+1,i+4} & \cdots & \cdots & c_{i+1,n+i+2} & 1 & 0 \\
 & & & & \ddots  & & & &\ddots  & 
\end{array}
\]
with $c_{i,j}\in R\setminus \{0\}$ such that each adjacent $2\times 2$-submatrix has determinant 1, that is,
$c_{i,j}c_{i+1,j+1}-c_{i,j+1}c_{i+1,j}=1$ for all $i\in \mathbb{Z}$ and $i+1\le j\le n+i+2$ (with $c_{i,i+1}=1=c_{i,n+i+2}$).
We sometimes write $\mathcal{C}=(c_{i,j})$ for such a frieze pattern.
\end{Definition}

Every non-zero frieze pattern $\mathcal{C}$ as above is {\em tame} (that is, all neighbouring $3\times 3$-determinants are 0); a proof of this fact can be found in \cite[Proposition 2.6]{CHJ20}.
\smallskip

Note that for a non-zero frieze pattern all entries are uniquely determined by the entries in the first diagonal $(\ldots,c_{i-1,i+1},c_{i,i+2},c_{i+1,i+3},\ldots)$. Following Conway and Coxeter \cite{CC73} we call this the {\em quiddity sequence} of the frieze pattern.
It is known that non-zero frieze patterns satisfy a glide reflection, in particular the quiddity sequence is invariant under translation by $n+3$ steps. So the entire frieze pattern is determined by the {\em quiddity cycle} $(c_{0,2},c_{1,3},\ldots,c_{n+2,n+4})$ (or any shift thereof). 
\smallskip

We address now the fundamental question of whether for a given ring $R$ there are finitely many or infinitely many non-zero frieze patterns over $R$ in each height.
The following known result shows that such a ring should have finitely many units.

\begin{Proposition}\label{prop:units}
Let $R$ be a ring containing infinitely many pairs $(a,b)\in R\setminus\{0\}$ with $ab=2$.
Then there are infinitely many non-zero frieze patterns of height $1$ over $R$.
In particular this holds for a ring $R$ with infinitely many units and characteristic $\ne 2$. 
\end{Proposition}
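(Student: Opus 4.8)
The plan is to reduce the statement to an explicit description of the non-zero frieze patterns of height $1$. Such a pattern has quiddity cycle of length $n+3=4$, and writing $q_i:=c_{i,i+2}$ (extended $4$-periodically in $i$) these $q_i$, together with the border entries $0$ and $1$, are all the entries of the pattern. So first I would write out the defining $2\times 2$-determinant conditions of Definition \ref{def:friezepattern} for $n=1$. Every such condition that involves a row of $0$'s or of $1$'s collapses to a trivial identity (of the shape $1\cdot 1-q_i\cdot 0=1$), and the only remaining one is the determinant straddling the quiddity row, namely $c_{i,i+2}c_{i+1,i+3}-c_{i,i+3}c_{i+1,i+2}=q_iq_{i+1}-1$, which must equal $1$. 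Hence a $4$-periodic sequence $(q_i)_{i\in\mathbb{Z}}$ with all $q_i\in R\setminus\{0\}$ is the quiddity cycle of a unique non-zero frieze pattern of height $1$ over $R$ if and only if $q_iq_{i+1}=2$ for all $i$, equivalently $q_0q_1=q_1q_2=q_2q_3=q_3q_0=2$.

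Given the hypothesis, I would then note that for each pair $(a,b)\in(R\setminus\{0\})^2$ with $ab=2$ the $4$-periodic sequence $\dots,a,b,a,b,\dots$ satisfies these conditions and hence is the quiddity cycle of a non-zero height-$1$ frieze pattern $\mathcal{C}_{a,b}$. Since the bi-infinite periodic sequences $\dots abab\dots$ and $\dots a'b'a'b'\dots$ agree up to a shift only when $(a',b')\in\{(a,b),(b,a)\}$, the assignment $(a,b)\mapsto\mathcal{C}_{a,b}$ is at most two-to-one; so infinitely many pairs $(a,b)$ with $ab=2$ and $a,b\neq 0$ produce infinitely many pairwise distinct non-zero frieze patterns of height $1$.

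For the final assertion, note that a ring of characteristic $\neq 2$ either is the zero ring (which has only one element, hence not infinitely many units) or satisfies $2\neq 0$. Assuming the latter, for every unit $u\in R^{\times}$ set $a:=2u^{-1}$ and $b:=u$; then $ab=2$, $b=u\neq 0$, and $a\neq 0$ since $a=0$ would force $2=0$. Distinct units give pairs with distinct second component, so there are infinitely many such pairs and the previous paragraph applies. The only genuine bookkeeping is in the first step---checking that, for $n=1$, all determinant conditions except the one across the quiddity row are automatic---and this is short; the rest is immediate.
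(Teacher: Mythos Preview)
Your proof is correct and follows essentially the same approach as the paper: construct, for each pair $(a,b)$ with $ab=2$, the height-$1$ frieze pattern with quiddity cycle $(a,b,a,b)$. The paper's proof is a one-liner that simply asserts this works, whereas you have carefully verified the determinant conditions, argued injectivity up to a two-to-one ambiguity, and spelled out how infinitely many units yield infinitely many such pairs; these are elaborations rather than a different route.
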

\begin{proof}
Each pair $(a,b)$ with $ab=2$ gives the quiddity cycle $(a,b,a,b)$ of length $4$, and this yields a non-zero frieze pattern over $R$ of height 1.
\end{proof}

From now on we consider subrings $R$ of the field of complex numbers $\mathbb{C}$. In the light of Proposition \ref{prop:units} it is relevant for our purposes to determine whether or not such a subring contains finitely or infinitely many units. This seems to be a difficult question in general if a subring $R$ contains transcendental numbers.
So we restrict in this paper to subrings of the field $\overline{\mathbb{Q}}$ of algebraic numbers.

We shall need the following notion frequently.

\begin{Definition} \label{def:friezesubring}
Let $R\le \mathbb{C}$ be a subring. We define $R^{\circ}$ to be the subring of $R$ generated by all entries of all non-zero frieze patterns over $R$ and call it the {\em frieze subring of $R$}.
\end{Definition}

We can then state a strong and maybe surprising consequence of Theorem \ref{thmunits}.

\begin{Corollary}\label{cor:Qd}
Let $R\le \mathbb{C}$ be a subring such that the number of non-zero frieze patterns over $R$ is finite in each height. 
If the frieze subring $\cR$ does not contain a transcendental number, then $\cR=\mathbb{Z}$ or there exists an integer $d\in \mathbb{Z}$, $d<0$ such that $\cR$ is an order in $\mathbb{Q}(\sqrt{d})$.
\end{Corollary}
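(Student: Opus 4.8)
The plan is to derive Corollary~\ref{cor:Qd} by combining Proposition~\ref{prop:units} with Theorem~\ref{thmunits}. The essential observation is that the frieze subring $\cR$ is itself a ring over which one may consider frieze patterns, and every non-zero frieze pattern over $\cR$ is also a non-zero frieze pattern over $R$; conversely, by the very definition of $\cR=R^\circ$, every non-zero frieze pattern over $R$ already has all its entries in $\cR$. Hence the non-zero frieze patterns over $\cR$ of a given height are in bijection with those over $R$ of that height, and by hypothesis there are only finitely many in each height.

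First I would show that $\cR$ has only finitely many units. Since $\cR\subseteq\overline{\mathbb{Q}}\subseteq\mathbb{C}$ and $\cR$ contains no transcendental number, it is a subring of $\overline{\mathbb{Q}}$; in particular it has characteristic $0$, so characteristic $\ne 2$. If $\cR^\times$ were infinite, then $\cR$ would contain infinitely many units, and applying Proposition~\ref{prop:units} to $R:=\cR$ would produce infinitely many non-zero frieze patterns of height $1$ over $\cR$, hence over $R$ --- contradicting the finiteness hypothesis. (Concretely, distinct units $u$ give distinct quiddity cycles $(2u^{-1},u,2u^{-1},u)$ with product $2$, as in the proof of Proposition~\ref{prop:units}; one only needs that $u\mapsto u$ is injective, which is clear.) Therefore $\cR^\times$ is finite.

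Next I would apply Theorem~\ref{thmunits} to the subring $\cR\le\overline{\mathbb{Q}}$: since $\cR$ has finitely many units, either $\cR=\mathbb{Z}$, or there is an integer $d\in\mathbb{Z}_{<0}$ such that $\cR$ is an order in $\mathbb{Q}(\sqrt d)$. This is precisely the conclusion of the corollary, so the proof is complete.

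The only point requiring a little care --- and hence the "main obstacle," though it is minor --- is the reduction establishing that finiteness of non-zero frieze patterns over $R$ in each height forces the same finiteness over $\cR$, so that Proposition~\ref{prop:units} can legitimately be invoked with $\cR$ in place of $R$. This rests entirely on Definition~\ref{def:friezesubring}: the entries of any non-zero frieze pattern over $R$ lie in $\cR$ by construction, so such a pattern is literally a non-zero frieze pattern over the subset $\cR\subseteq R$; conversely any non-zero frieze pattern over $\cR$ is one over $R$. Thus the two counts agree, and finiteness transfers. Everything else is a direct citation of results already available in the excerpt.
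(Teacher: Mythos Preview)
Your proof is correct and follows essentially the same route as the paper: use Proposition~\ref{prop:units} to deduce that $\cR$ has finitely many units, then invoke Theorem~\ref{thmunits}. The only difference is cosmetic---you spell out more carefully why finiteness of non-zero frieze patterns over $R$ transfers to $\cR$, whereas the paper leaves this implicit.
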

\begin{proof}
Since the number of non-zero frieze patterns in each height is finite, it is in particular finite in height $1$; this implies that $\cR$ has finitely many units by Proposition \ref{prop:units}.
If $\cR$ does not contain a transcendental number, then $\cR\le \overline{\mathbb{Q}}$, thus by Theorem \ref{thmunits},
$\cR=\mathbb{Z}$ or 
$\cR$ is an order in an imaginary quadratic number field.
\end{proof}

This corollary is the key step to proving our first main result Theorem \ref{thm:Rcirc} from the introduction. We restate this theorem here for the convenience of the reader.

\begin{Theorem} \label{thm:Rcircv2}
Let $R\le \mathbb{C}$ be a subring. 
If the frieze subring $\cR$ is contained in  $\overline{\mathbb{Q}}$, then there are finitely many non-zero frieze patterns over $R$ in each positive height if and only if
\begin{enumerate}
 \item[{(i)}] $\cR=\mathbb{Z}$,\quad or
 \item[{(ii)}] $\cR$ is an order in $\mathbb{Q}(\sqrt{d})$ with $d\in \{-1,-2,-3,-7,-11\}$.
\end{enumerate}
\end{Theorem}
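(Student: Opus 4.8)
The plan is to first pass from $R$ to its frieze subring $\cR$ and then to combine Corollary~\ref{cor:Qd} with Theorem~\ref{thmint_3}. The key preliminary observation, which is pure bookkeeping, is that for each height the non-zero frieze patterns over $R$ are literally the same objects as the non-zero frieze patterns over $\cR$: every entry of a frieze pattern over $R$ lies in $R\setminus\{0\}$ and hence in $\cR\setminus\{0\}$ by the definition of $\cR$, while conversely $\cR\setminus\{0\}\subseteq R\setminus\{0\}$. So I may and will argue throughout with $\cR$ in place of $R$; note also that $\cR^\circ=\cR$, which is what makes this reduction clean.

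For the implication that (i) or (ii) forces finiteness in each height, I would observe that in both cases $\cR$ is a discrete subring of $\mathbb{C}$: this is clear for $\cR=\mathbb{Z}$, and in case (ii) the ring $\cR$ is an order in $\mathbb{Q}(\sqrt d)$, hence contained in the maximal order $\mathcal{O}_d$, which is discrete by Proposition~\ref{discreteQd}. Then \cite[Corollary~3.8]{CH17} yields that there are only finitely many non-zero frieze patterns over $\cR$ — equivalently over $R$ — of each height.

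For the converse, suppose there are finitely many non-zero frieze patterns over $R$ in each positive height. Since $\cR\subseteq\overline{\mathbb{Q}}$ by hypothesis, $\cR$ contains no transcendental number, so Corollary~\ref{cor:Qd} applies: either $\cR=\mathbb{Z}$, which is case (i), or $\cR$ is an order in $\mathbb{Q}(\sqrt d)$ for some negative integer $d$, and after replacing $d$ by its square-free part (which does not change the field, and the five exceptional values are themselves square-free) we may assume $d$ is square-free. It then remains to exclude $d\notin\{-1,-2,-3,-7,-11\}$. This is where Theorem~\ref{thmint_3} enters: as an order in $\mathbb{Q}(\sqrt d)$ the ring $\cR$ is contained in $\mathcal{O}_d$, so every non-zero frieze pattern over $\cR$ is in particular a non-zero frieze pattern over $\mathcal{O}_d$; if $d$ were not one of the five exceptional values, Theorem~\ref{thmint_3} ($\mathcal{O}_d^\circ=\mathbb{Z}$) would force all entries of all these frieze patterns into $\mathbb{Z}$, whence $\cR\subseteq\mathbb{Z}$ — contradicting that $\cR$ has $\mathbb{Z}$-rank $2$. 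Hence $d\in\{-1,-2,-3,-7,-11\}$, which is case (ii).

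The routine parts are the reduction to $\cR$ and the discreteness argument for the ``if'' direction; the substantive input is Theorem~\ref{thmint_3} (restated later as Theorem~\ref{thm:imaginary}), which does all the work of cutting the list of admissible fields down to five. The main point to be careful about is therefore not computational but logical: one must check that the proof of Theorem~\ref{thmint_3} does not itself invoke the present theorem, so that citing it here introduces no circularity.
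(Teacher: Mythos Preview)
Your proof is correct and follows essentially the same route as the paper's: Corollary~\ref{cor:Qd} for the forward direction, Theorem~\ref{thm:imaginary} (equivalently Theorem~\ref{thmint_3}) to cut down to the five exceptional fields, and discreteness together with \cite[Corollary~3.8]{CH17} for the converse. Your explicit reduction from $R$ to $\cR$ and your framing of the non-exceptional case as a contradiction are slightly cleaner than the paper's phrasing (``we are in case~$(i)$''), but the substance is identical, and your circularity check is warranted and passes --- the proof of Theorem~\ref{thm:imaginary} relies only on Lemma~\ref{lem:small}, results from \cite{CH17}, and Theorem~\ref{thm:Fontaine}.
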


\begin{proof}
Let $R^{\circ}\le \overline{\mathbb{Q}}$ and suppose that there are only finitely many non-zero frieze patterns over $R$ in each height. By Corollary \ref{cor:Qd} we know that $\cR=\mathbb{Z}$ or
$\cR$ is an order in $\mathbb{Q}(\sqrt{d})$ for some rational integer $d<0$.
If $R^{\circ}=\mathbb{Z}$ or $R^{\circ}\subseteq \mathcal{O}_d$ with $d\in \{-1,-2,-3,-7,-11\}$, then we are in case $(i)$ or case $(ii)$ of the theorem and there is nothing to show.
For every negative rational integer $d\not\in \{-1,-2,-3,-7,-11\}$ we show in Theorem \ref{thm:imaginary} that $\mathcal{O}_d^{\circ}=\mathbb{Z}$, and we are in case $(i)$ of the
theorem.

We now prove the converse. If $R^{\circ}=\mathbb{Z}$ then there are finitely many non-zero frieze patterns in each height by the work of Conway-Coxeter \cite{CC73} and Fontaine \cite{F14}; we give an independent proof of this fact in Section \ref{sec:reduction} below. 
As mentioned in the introduction, it is known that for every rational integer $d<0$ the ring of integers $\mathcal{O}_d$ is a discrete subset of $\mathbb{C}$ and hence there are finitely many non-zero frieze patterns over $\mathcal{O}_d$ in each height by \cite[Corollary 3.8]{CH17}.
\end{proof}

We close this section with general observations on frieze patterns over rings. The first one abstracts one of the steps in the proof of Proposition \ref{prop:unit}.

\begin{Lemma}\label{ab_units}
Let $S$ be a Dedekind domain with torsion class group and $S\le R \le \quot(S)$ a ring. If $S\ne R$, then $S^\times$ has infinite index in $R^\times$.
\end{Lemma}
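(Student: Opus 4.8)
The plan is to argue by contradiction: assume $S\neq R$ but $S^\times$ has finite index in $R^\times$, and deduce that $R=S$. Since $S$ is a Dedekind domain, $R$ is an overring of $S$, hence a localization-type ring: there is a multiplicatively closed set of primes such that $R$ is the intersection of the localizations of $S$ at the primes not inverted. More concretely, since $S\neq R$, there is an element $\alpha\in R\setminus S$, so there is a nonzero prime ideal $\mathfrak{p}$ of $S$ at which $\alpha$ has negative valuation; thus $R$ contains an element whose $\mathfrak{p}$-adic valuation is strictly negative.

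First I would extract, from such an $\alpha$, a genuine unit of $R$ of infinite order, imitating the argument in the proof of Proposition \ref{prop:unit}. Factor the fractional ideal $(\alpha)S = \prod_{\mathfrak{p}\in S^{+}}\mathfrak{p}^{n(\mathfrak{p})}\cdot\prod_{\mathfrak{q}\in S^{-}}\mathfrak{q}^{n(\mathfrak{q})}$ with $n(\mathfrak{p})>0$, $n(\mathfrak{q})<0$, and $S^{-}\neq\emptyset$ by the choice of $\alpha$. Because the class group of $S$ is torsion, some power $h\geq 1$ makes every prime appearing principal, so $\alpha^{h}=\gamma_1/\gamma_2$ with $\gamma_1,\gamma_2\in S$ coprime and $\gamma_2$ having strictly positive valuation at some prime $\mathfrak{q}_0$. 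Coprimality gives $x,y\in S$ with $x\gamma_1+y\gamma_2=1$, whence $x\alpha^{h}+y = 1/\gamma_2 \in R$; therefore $\gamma_2\in R^\times$. Since $v_{\mathfrak{q}_0}(\gamma_2)>0$ while $v_{\mathfrak{q}_0}$ of any element of $S^\times$ (indeed of any unit of $S$) is $0$, no power $\gamma_2^{k}$ with $k\neq 0$ lies in $S$, and in particular $\gamma_2$ has infinite order in $R^\times$.

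Next I would close the argument. The cyclic group $\langle\gamma_2\rangle\le R^\times$ is infinite, and by assumption $S^\times$ has finite index, say $m$, in $R^\times$; then $\gamma_2^{m}\in S^\times\subseteq S$. But $v_{\mathfrak{q}_0}(\gamma_2^{m})=m\cdot v_{\mathfrak{q}_0}(\gamma_2)>0$, contradicting the fact that units of $S$ have trivial valuation at every prime. Hence the assumption $S\neq R$ forces $S^\times$ to have infinite index in $R^\times$, which is exactly the claim. The only place requiring care — and the main (mild) obstacle — is justifying the passage from "$S\neq R$" to "there is an element of $R$ with negative valuation at some prime of $S$": this uses that $S$ is Dedekind (so $S=\bigcap_{\mathfrak{p}}S_{\mathfrak{p}}$ and a fractional-ideal factorization is available for $(\alpha)$), together with the torsion class group to clear denominators via the coprime factorization $\gamma_1/\gamma_2$. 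Everything else is the valuation bookkeeping already rehearsed in Proposition \ref{prop:unit}.
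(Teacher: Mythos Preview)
Your argument is correct and follows essentially the same route as the paper's proof: both extract from $\alpha\in R\setminus S$ an element $\gamma_2\in R^\times$ with positive valuation at some prime of $S$ (using torsion of the class group in place of the finite class group from Proposition~\ref{prop:unit}), and then conclude that the image of $\gamma_2$ in $R^\times/S^\times$ has infinite order because $S^\times$ lies in the kernel of the valuation (divisor) map. One small slip: you write ``no power $\gamma_2^{k}$ with $k\neq 0$ lies in $S$'', but $\gamma_2\in S$, so its positive powers do; you mean $S^\times$, as your subsequent paragraph makes clear.
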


\begin{proof}
One can apply the proof of Proposition \ref{prop:unit} almost verbatim. The only additional observation is that in the step of that proof where we invoked the finiteness of the class group we only needed it to be a torsion group. Following that argument (with same notation) we find an element $\gamma_2$ which has positive valuation at some prime of $S$ and is invertible in $R$. This means that the map
$$\text{div}_R:\quot(S)^\times \to \bigoplus_{\mathfrak{p} \ \text{maximal ideal in} \ S} \mathbb{Z}\cdot \mathfrak{p},
$$
sending an element of $\quot(S)^\times$ in its valuation vector, sends $S^\times$ to $0$ and $\gamma_2$ to a non-zero element, hence necessarily of infinite order. It follows that $\frac{R^{\times}}{S^\times}$ has a quotient of infinite order and therefore it is infinite, as desired.
\end{proof}

\begin{Remark}
If $S$ is finite in Lemma \ref{ab_units}, then it is well known that $S$ is a field. Hence in this case $S=\quot(S)$ and there is no ring $R\ne S$ between $S$ and $\quot(S)$.
\end{Remark}

\begin{Corollary} \label{cor:pidunit}
Let $S$ is a principal ideal domain and $S\le R \le \quot(S)$ a ring. If $S\ne R$, then $R$ has infinitely many units.
\end{Corollary}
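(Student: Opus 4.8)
The plan is to deduce this immediately from Lemma~\ref{ab_units}. First I would dispose of the degenerate case: if $S$ is a field, then $\quot(S)=S$, so no ring $R$ with $S\subsetneq R\subseteq\quot(S)$ exists and the assertion holds vacuously; this is exactly the content of the Remark following Lemma~\ref{ab_units}. So assume $S$ is a principal ideal domain that is not a field. Then $S$ is a Dedekind domain, and its ideal class group is trivial (every ideal is principal), hence in particular a torsion group. Therefore Lemma~\ref{ab_units} applies with this $S$ and tells us that $S^\times$ has infinite index in $R^\times$ whenever $S\ne R$. Since an abelian group possessing a subgroup of infinite index must itself be infinite, we conclude that $R^\times$ is infinite, as claimed.

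There is essentially no obstacle here: the only thing worth stating with care is that a principal ideal domain (other than a field) does qualify as a ``Dedekind domain with torsion class group'' in the sense of Lemma~\ref{ab_units}, and that having a proper subgroup of infinite index forces the ambient abelian group to be infinite. If one wishes, the conclusion can be sharpened using the proof of Lemma~\ref{ab_units}: that argument produces an explicit $\gamma_2\in R^\times$ with strictly positive valuation at some maximal ideal of $S$, hence of infinite order, so in fact $R^\times$ surjects onto a free abelian group of positive rank.
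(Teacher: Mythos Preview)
Your argument is correct and follows exactly the route the paper takes: a principal ideal domain is a Dedekind domain with trivial (hence torsion) class group, so Lemma~\ref{ab_units} applies and gives that $S^\times$ has infinite index in $R^\times$, whence $R^\times$ is infinite. The paper's proof is a one-line version of what you wrote; your additional remarks on the field case and on the explicit infinite-order element are sound but not needed.
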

\begin{proof}
This follows at once from Lemma \ref{ab_units} and the fact that principal ideal domains are precisely the Dedekind domains with trivial (and so in particular torsion) class group.
\end{proof}

\begin{Proposition} \label{prop:pid}
Let $S$ be a principal ideal domain and $\quot(S)$ be of characteristic $\ne 2$.
If $S\le R\le \quot(S)$ is a ring such that the number of non-zero frieze patterns over $R$ is finite in each height, then $R=S$.
\end{Proposition}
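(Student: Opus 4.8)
The plan is to combine Corollary~\ref{cor:pidunit} with Proposition~\ref{prop:units} to force $R=S$ directly. Suppose for contradiction that $S\le R\le\quot(S)$ is a ring properly containing $S$. By Corollary~\ref{cor:pidunit}, the ring $R$ then has infinitely many units. Since $\quot(S)$ has characteristic $\ne 2$, so does $R$, and hence the hypotheses of the second part of Proposition~\ref{prop:units} are satisfied: a ring with infinitely many units and characteristic $\ne 2$ contains infinitely many pairs $(a,b)\in R\setminus\{0\}$ with $ab=2$ (take $a=u$ and $b=2u^{-1}$ as $u$ ranges over the units, noting that $2u^{-1}\ne 0$ in characteristic $\ne 2$ and that distinct $u$ give distinct pairs). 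Therefore Proposition~\ref{prop:units} yields infinitely many non-zero frieze patterns over $R$ of height~$1$, contradicting the assumption that there are only finitely many in each height. Hence no such proper overring $R$ exists, i.e.\ $R=S$.

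The argument is short because all the work has been offloaded onto the earlier results. The only points requiring a moment's care are: (a) that the characteristic hypothesis on $\quot(S)$ transfers to $R$ (immediate, since $R\subseteq\quot(S)$ and both contain the prime subfield); and (b) that ``infinitely many units'' genuinely produces ``infinitely many pairs $(a,b)$ with $ab=2$'' — this is exactly the ``in particular'' clause already recorded in Proposition~\ref{prop:units}, so it may simply be cited. I do not foresee a real obstacle here; the substance of the statement lies entirely in Corollary~\ref{cor:pidunit} (equivalently Lemma~\ref{ab_units}), which has already been established.

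One alternative phrasing, should one prefer to avoid invoking the ``characteristic $\ne 2$'' clause of Proposition~\ref{prop:units} verbatim: take any unit $u\in R^\times$ of infinite order (such a $u$ exists because $R^\times$ is infinite and finitely generated torsion subgroups of $\quot(S)^\times$, being subgroups of the roots of unity in a field, would be finite — more simply, $R^\times\setminus\mu$ is nonempty once $R^\times$ is infinite, where $\mu$ is the finite group of roots of unity in $\quot(S)$). Then the pairs $(u^k,\,2u^{-k})$ for $k\ge 1$ are pairwise distinct with product $2$, and each gives the quiddity cycle $(u^k,2u^{-k},u^k,2u^{-k})$ of a distinct non-zero frieze pattern of height~$1$. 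Either way the conclusion $R=S$ follows, completing the proof.
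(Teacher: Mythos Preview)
Your main argument is correct and is essentially identical to the paper's own proof: assume $R\neq S$, invoke Corollary~\ref{cor:pidunit} to obtain infinitely many units in $R$, and then apply Proposition~\ref{prop:units} (using the characteristic $\neq 2$ hypothesis) to produce infinitely many non-zero frieze patterns of height~$1$, contradicting finiteness.

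One caution about your ``alternative phrasing'' paragraph: it is unnecessary, and as written it contains a small gap. You assert that the group $\mu$ of roots of unity in $\quot(S)$ is finite, but for an arbitrary PID this need not hold; for instance, if $S=k[x]$ with $k$ an algebraically closed field of characteristic~$0$, then $\quot(S)=k(x)$ contains all roots of unity of $k$, an infinite set. So the existence of a unit of infinite order does not follow from $|R^\times|=\infty$ alone in this generality. Since the direct appeal to Proposition~\ref{prop:units} already suffices, simply omit the alternative.
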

\begin{proof}
Assume for a contradiction that $\frac{a}{b}\in R\setminus S$ where $a,b\in S$.
By Corollary \ref{cor:pidunit}, $R$ has infinitely many units.
But this contradicts Proposition \ref{prop:units}: $R$ can only have finitely many units because there are only finitely many non-zero frieze patterns over $R$ of height 1 by assumption.
\end{proof}

As the special case $S=\mathbb{Z}$ in Proposition \ref{prop:pid} we obtain that $\mathbb{Z}$ is the only subring of the rational numbers having finitely many non-zero frieze patterns in each height.

\begin{Corollary}
Let $R\le \mathbb{Q}$ be a subring such that the number of non-zero frieze patterns over $R$ is finite in each height.
Then $R=\mathbb{Z}$.
\end{Corollary}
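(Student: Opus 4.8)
The plan is to obtain this as the special case $S=\mathbb{Z}$ of Proposition \ref{prop:pid}. First I would note that every subring $R\le\mathbb{Q}$ automatically contains $\mathbb{Z}$, since it contains $1$ and is closed under addition and negation; hence $\mathbb{Z}\le R\le\mathbb{Q}=\quot(\mathbb{Z})$. As $\mathbb{Z}$ is a principal ideal domain and $\mathbb{Q}$ has characteristic $0\ne 2$, Proposition \ref{prop:pid} applies verbatim and forces $R=\mathbb{Z}$.

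For a reader who prefers the underlying mechanism spelled out rather than invoking Corollary \ref{cor:pidunit} and Lemma \ref{ab_units} on which Proposition \ref{prop:pid} rests, I would argue directly by contradiction. Suppose $R\ne\mathbb{Z}$ and pick $x=a/b\in R$ with $a,b$ coprime integers and $b>1$. Choosing a prime $p\mid b$ and multiplying $x$ by the integer $b/p\in\mathbb{Z}\subseteq R$ shows $a/p\in R$; since $\gcd(a,p)=1$, B\'ezout gives $1/p\in R$, hence $p^{k},1/p^{k}\in R$ for all $k\ge 1$. Then the pairs $(2p^{k},1/p^{k})$, $k\ge 1$, are infinitely many elements of $R\setminus\{0\}$ with product $2$, so by Proposition \ref{prop:units} --- the quiddity cycles $(2p^{k},1/p^{k},2p^{k},1/p^{k})$ being pairwise distinct --- there are infinitely many non-zero frieze patterns of height $1$ over $R$, contradicting the hypothesis. (Equivalently one may quote Corollary \ref{cor:pidunit} to get infinitely many units in $R$ and then appeal to Proposition \ref{prop:units}.)

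I do not expect a genuine obstacle: the statement is a formal corollary, recorded only to isolate the case $S=\mathbb{Z}$, which is precisely the situation relevant to frieze patterns over $\mathbb{Z}$ treated in Section \ref{sec:reduction}. The one ingredient that is not purely formal is the fact that a subring of $\mathbb{Q}$ strictly larger than $\mathbb{Z}$ contains the reciprocal of some prime --- equivalently, contains a non-unit whose inverse also lies in $R$ --- and this is already packaged inside Corollary \ref{cor:pidunit} via Lemma \ref{ab_units}.
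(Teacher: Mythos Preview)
Your proposal is correct and matches the paper's approach exactly: the corollary is stated immediately after Proposition~\ref{prop:pid} as its special case $S=\mathbb{Z}$, with no further proof given. Your additional direct argument via B\'ezout and the explicit quiddity cycles is a nice unpacking of what Corollary~\ref{cor:pidunit} and Proposition~\ref{prop:units} are doing, but it goes beyond what the paper records.
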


The following result is well-known, we include a proof for the convenience of the reader.

\begin{Proposition}\label{discreteQd}
\begin{enumerate}
    \item Let $R\le \mathbb{C}$ be a discrete subring. Then there exists a rational integer $d<0$ such that $R\le \mathbb{Q}(\sqrt{d})$.
    \item Let $z\in \overline{\mathbb{Q}}\le \mathbb{C}$ be such that $\mathbb{Z}[z]$ is discrete with respect to the topology induced by $\mathbb{C}$. Then $\mathbb{Z}[z]$ is an order in $\mathbb{Q}(z)$.
\end{enumerate}
\end{Proposition}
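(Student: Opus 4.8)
The plan is to treat the two parts in sequence, since (1) will follow fairly quickly once (2) is in place, together with the structure theory of discrete subrings. For part (2), let $z\in\overline{\mathbb{Q}}$ with $\mathbb{Z}[z]$ discrete in $\mathbb{C}$. The key point is that $z$ must be an algebraic integer: if not, write the minimal polynomial of $z$ over $\mathbb{Q}$ with a denominator, say $z$ satisfies $a_m z^m + \dots + a_0 = 0$ with $a_m>1$ and $\gcd(a_0,\dots,a_m)=1$; then $\mathbb{Z}[z]$ is not finitely generated as an abelian group, and more to the point $\frac{1}{a_m}$-type combinations produce elements of $\mathbb{Z}[z]$ accumulating near $0$. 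Concretely, I would argue that $\mathbb{Z}[z]$ contains $\mathbb{Z}[1/p]\cdot w$ for a suitable nonzero $w$ and prime $p\mid a_m$ (using the standard fact that $\mathbb{Z}[z]\cap\mathbb{Q}$ is larger than $\mathbb{Z}$ when $z$ is not integral, or that the leading coefficient of the minimal polynomial becomes invertible in $\mathbb{Z}[z]$ modulo the ideal generated by $z$), and $\mathbb{Z}[1/p]$ is dense in $\mathbb{Q}_p$-adic sense but, crucially, as a subset of $\mathbb{R}$ it accumulates at $0$ — contradicting discreteness. Hence $z$ is an algebraic integer, so $\mathbb{Z}[z]$ is a full-rank subgroup of $\mathcal{O}_{\mathbb{Q}(z)}$, i.e. an order in $\mathbb{Q}(z)$.

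For part (1), let $R\le\mathbb{C}$ be a discrete subring. Since $R$ is discrete, $R$ is in particular a discrete subgroup of $(\mathbb{C},+)\cong\mathbb{R}^2$, hence free abelian of rank at most $2$. If $\mathrm{rank}(R)\le 1$ then $R\subseteq\mathbb{R}$ is cyclic and a ring, forcing $R=\mathbb{Z}$ (or $R=0$), and we may take any $d<0$. So assume $\mathrm{rank}(R)=2$; pick $z\in R$ with $z\notin\mathbb{R}$. Then $\mathbb{Z}[z]\subseteq R$ is discrete, and $z$ is algebraic — indeed, a rank-$2$ discrete subring is finitely generated as an abelian group, so every element is integral over $\mathbb{Z}$. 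By part (2), $\mathbb{Z}[z]$ is an order in $\mathbb{Q}(z)$, which is a number field containing the non-real number $z$, hence has a pair of complex embeddings; being rank $2$ it must be quadratic, and being non-real it is imaginary quadratic, so $\mathbb{Q}(z)=\mathbb{Q}(\sqrt{d})$ for some square-free $d<0$. Finally $R$, being a rank-$2$ subgroup of $\mathbb{Q}(z)$ which is itself $2$-dimensional over $\mathbb{Q}$, satisfies $R\otimes\mathbb{Q}=\mathbb{Q}(z)$, so $R\le\mathbb{Q}(z)=\mathbb{Q}(\sqrt{d})$ as claimed. (One should double-check the degenerate possibilities: $R\subseteq\mathbb{R}$ of rank $2$ cannot happen for a discrete ring, since a rank-$2$ discrete subgroup of $\mathbb{R}$ would not be discrete; and if $R$ has rank $2$ but $R\otimes\mathbb{Q}$ were all of $\mathbb{C}$'s... no, $R\otimes\mathbb{Q}$ has dimension $\le 2$ over $\mathbb{Q}$, so it is a quadratic field, real or imaginary — and a real quadratic order is not discrete in $\mathbb{C}$ only if... actually a real quadratic order IS discrete in $\mathbb{C}$, being contained in $\mathbb{R}$ as a rank-$2$ subgroup — wait, that is not discrete in $\mathbb{R}$. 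So this case is excluded because $\mathcal{O}_d$ for $d>0$ is dense in $\mathbb{R}$.)

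The main obstacle I expect is the non-integrality step in part (2): making precise why a non-integral algebraic $z$ forces $\mathbb{Z}[z]$ to fail discreteness. The clean way is: if $z$ is not an algebraic integer, then $\mathbb{Z}[z]$ is not a finitely generated $\mathbb{Z}$-module, but a discrete subgroup of $\mathbb{C}=\mathbb{R}^2$ is always finitely generated (free of rank $\le 2$) — this is the crisp contradiction and avoids any explicit accumulation argument. So the real content reduces to the standard fact that $\mathbb{Z}[z]$ is a finitely generated abelian group if and only if $z$ is integral over $\mathbb{Z}$, plus the classical description of discrete subgroups of $\mathbb{R}^n$. Everything else is bookkeeping with ranks and embeddings of number fields.
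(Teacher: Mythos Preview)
Your argument is correct and rests on the same two ingredients as the paper's: a discrete subgroup of $\mathbb{C}\cong\mathbb{R}^2$ is free of rank at most $2$, and integrality follows from finite generation as a $\mathbb{Z}$-module. The paper, however, orders things the other way round: it proves (1) first by writing $R=\langle 1,\omega\rangle$ as an abelian group (possible since $1\in R$ and the rank is $2$), reading off $\omega^2=a\omega+b$ with $a,b\in\mathbb{Z}$ directly from the fact that $\omega^2\in R$, and then obtains (2) by applying the argument of (1) to $\mathbb{Z}[z]$. This is slightly more hands-on than your appeal to the general equivalence ``$\mathbb{Z}[z]$ finitely generated $\Leftrightarrow$ $z$ integral'', and it also makes the containment $R\le\mathbb{Q}(\omega)$ automatic, whereas your sentence ``$R$, being a rank-$2$ subgroup of $\mathbb{Q}(z)$, \ldots'' reads as assuming what is to be proved---the tensor argument $R\hookrightarrow R\otimes_{\mathbb{Z}}\mathbb{Q}=\mathbb{Q}(z)$ that follows is the real justification and should replace that phrase. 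The exploratory parenthetical at the end and the first paragraph about denominators and $\mathbb{Z}[1/p]$ can be cut entirely; the clean ``discrete $\Rightarrow$ finitely generated $\Rightarrow$ integral'' line you settle on afterwards already does all the work.
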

\begin{proof}
(1) 
Since $R$ is discrete, as an abelian subgroup of $\mathbb{C}$ its rank is at most two. If the rank is one, then $R = \mathbb{Z}$.
Otherwise there exists an $\omega\in R$ such that $R=\langle 1,\omega\rangle$ as an abelian group. Thus there are $a,b\in\mathbb{Z}$ such that $\omega^2 = a\omega+b$. Hence $\omega$ is an algebraic integer and the field 
$\mathbb{Q}(\omega)$ has degree 2 over $\mathbb{Q}$. Moreover, $\mathbb{Q}(\omega) \not\subset \mathbb{R}$ because $R$ has rank two. So $\mathbb{Q}(\omega)=\mathbb{Q}(\sqrt{d})$ for some $d<0$ and it follows that $R\le \mathbb{Q}(\omega)\le
\mathbb{Q}(\sqrt{d})$, as claimed.

(2)
We proceed as in (1) for $R=\mathbb{Z}[z]$ and obtain an algebraic integer $\omega$ such that $\mathbb{Z}[z]=\langle 1,\omega\rangle$. Thus $z$ is integral as well and hence $\mathbb{Z}[z]$ is an order in $\mathbb{Q}(z)$.
\end{proof}

\section{Non-zero frieze patterns over integers}
\label{sec:reduction}

The goal of this section is to give a self-contained proof of the known classification of frieze patterns with entries from the set of non-zero rational integers. It turns out that in addition to the classic Conway-Coxeter frieze patterns over the positive rational integers only very few new frieze patterns occur. We shall give an independent proof
of this result below. These non-zero frieze patterns over $\mathbb{Z}$ will appear again in the next section when we study frieze patterns over rings of quadratic integers. 
\smallskip

We first provide some observations which hold more generally for non-zero frieze patterns over the set of real numbers with absolute value at least $\frac{1}{\sqrt{2}}\approx 0,7071$. 

\begin{Lemma} \label{lemma:signs}
Let $\mathcal{C}=(c_{i,j})$ be a non-zero frieze pattern as in Definition \ref{def:friezepattern} and suppose that
$c_{i,j}\in \mathbb{R}_{> a}\cup \mathbb{R}_{< -a}$ where $a=\frac{1}{\sqrt{2}}$. We set 
$$\varepsilon_{i,j}=\left\{ 
\begin{array}{rl} 1 & \mbox{if $c_{i,j}>0$} \\ -1 & \mbox{if $c_{i,j}<0$}
\end{array} \right.. 
$$
Then the following hold.
\begin{enumerate}
\item[{(a)}] For all $i\in\mathbb{Z}$ and $i+2\le j\le n+i+1$ 
we have 
$$\varepsilon_{i,j}\varepsilon_{i+1,j}\varepsilon_{i,j+1}\varepsilon_{i+1,j+1}=1.
$$
\item[{(b)}] For all $i\in\mathbb{Z}$ and $1\le \ell \le n+1$ 
we have 
$$\varepsilon_{i,i+2}\varepsilon_{i+1,i+2}\varepsilon_{i,i+\ell+1}\varepsilon_{i+1,i+\ell+1}=1.
$$
\item[{(c)}] We have $|\{\varepsilon_{i,i+2}\,|\,i\in \mathbb{Z}\}|=1$.
\end{enumerate}
\end{Lemma}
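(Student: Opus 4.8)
The plan is to prove the three statements in order, deriving (b) from (a) and (c) from (a)--(b) together with the glide-reflection symmetry of non-zero frieze patterns. It is convenient to extend the sign notation to the bordering entries by setting $\varepsilon_{i,i+1}=\varepsilon_{i,n+i+2}:=1$; since these entries equal $1>\tfrac{1}{\sqrt2}$, every entry $c_{i,j}$ with $i+1\le j\le n+i+2$ then satisfies $|c_{i,j}|>\tfrac{1}{\sqrt2}$, so that a product of any two of them has absolute value $>\tfrac12$.

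For (a) I would argue directly from the unimodular rule $c_{i,j}c_{i+1,j+1}-c_{i,j+1}c_{i+1,j}=1$. Put $u:=c_{i,j}c_{i+1,j+1}$ and $v:=c_{i,j+1}c_{i+1,j}$, so $u-v=1$ with $|u|,|v|>\tfrac12$. If $u$ and $v$ had opposite signs, then either $u>\tfrac12$ and $v<-\tfrac12$, forcing $u-v>1$, or $u<-\tfrac12$ and $v>\tfrac12$, forcing $u-v<-1$; both contradict $u-v=1$. Hence $u$ and $v$ have the same sign, i.e.\ $\varepsilon_{i,j}\varepsilon_{i+1,j+1}=\varepsilon_{i,j+1}\varepsilon_{i+1,j}$, which is (a) after rearranging. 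This is the only place the value $\tfrac{1}{\sqrt2}$ enters, through $\bigl(\tfrac{1}{\sqrt2}\bigr)^2=\tfrac12$. For (b) I would rewrite (a) as $\varepsilon_{i,j}\varepsilon_{i+1,j}=\varepsilon_{i,j+1}\varepsilon_{i+1,j+1}$, so that $j\mapsto\varepsilon_{i,j}\varepsilon_{i+1,j}$ is constant for $j\in\{i+2,\dots,n+i+2\}$; evaluating at $j=i+2$ and using $c_{i+1,i+2}=1$, this common value is $\varepsilon_{i,i+2}\varepsilon_{i+1,i+2}$, and reading it off at $j=i+\ell+1$ with $1\le\ell\le n+1$ is exactly (b).

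For (c), I would first iterate $\varepsilon_{i,j}=\varepsilon_{i,i+2}\,\varepsilon_{i+1,j}$ (a restatement of the constancy above, valid for $i+2\le j\le n+i+2$) down the array until reaching the quiddity entry $c_{j-2,j}$, which gives
\[
\varepsilon_{i,j}=\prod_{m=i}^{j-2}\varepsilon_{m,m+2}\qquad\text{for }i+1\le j\le n+i+2 .
\]
Taking $j=n+i+2$, where $c_{i,n+i+2}=1$, yields $\prod_{m=i}^{n+i}\varepsilon_{m,m+2}=1$ for every $i$. Next I would invoke the glide reflection of a non-zero frieze pattern in the explicit form $c_{i,j}=c_{j,\,n+i+3}$ (recalled after Definition~\ref{def:friezepattern}; one checks readily that this symmetry respects the unimodular rule). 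Applying the displayed identity to $c_{i,i+2}$ and to $c_{i+2,\,n+i+3}$ and using the glide gives
\[
\varepsilon_{i,i+2}=\varepsilon_{i+2,\,n+i+3}=\prod_{m=i+2}^{n+i+1}\varepsilon_{m,m+2} ,
\]
hence $\varepsilon_{i+1,i+3}\,\varepsilon_{i,i+2}=\prod_{m=i+1}^{n+i+1}\varepsilon_{m,m+2}=1$, so $\varepsilon_{i,i+2}=\varepsilon_{i+1,i+3}$. As $i$ was arbitrary, all the $\varepsilon_{i,i+2}$ agree, i.e.\ $|\{\varepsilon_{i,i+2}\mid i\in\mathbb Z\}|=1$, which is (c).

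I expect (c) to be the only real difficulty. Parts (a) and (b) are short, and the first half of (c) is bookkeeping with (a); what is not automatic is that one must bring in the glide reflection. Indeed (a), (b) and the border identity $\prod_{m=i}^{n+i}\varepsilon_{m,m+2}=1$ --- even together with the weaker consequence of the glide that the quiddity has period $n+3$ --- only force the sequence $(\varepsilon_{i,i+2})_i$ to be periodic with period dividing $\gcd(n+1,n+3)\in\{1,2\}$, and for $n\equiv 3\pmod 4$ this still permits a sign-alternating quiddity; it is precisely the full glide-reflection symmetry that excludes this. A proof avoiding the glide would instead have to extract an extra relation among three consecutive quiddity signs directly from the unimodular rule and the bound $|c_{i,j}|>\tfrac{1}{\sqrt2}$, which looks considerably more delicate.
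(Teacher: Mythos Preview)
Your proof is correct and follows essentially the same approach as the paper: part~(a) by a sign/magnitude analysis of the $2\times2$ determinant, part~(b) by iterating~(a) along a row, and part~(c) by combining~(b) with the glide-reflection symmetry $c_{i,j}=c_{j,n+i+3}$. The paper's argument for~(c) is slightly more direct---it applies~(b) once at $\ell=n+1$ to get $\varepsilon_{i,i+2}=\varepsilon_{i+1,n+i+2}$ and then uses the glide in the form $c_{i-1,i+1}=c_{i+1,n+i+2}$---whereas you first develop the product formula $\varepsilon_{i,j}=\prod_{m=i}^{j-2}\varepsilon_{m,m+2}$; but the substance is the same.
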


\begin{proof}
$(a)$
Note that the assertion in the lemma claims that in each neighbouring $2\times 2$-submatrix 
$\begin{array}{cc} c_{i,j} & c_{i,j+1} \\ c_{i+1,j} & c_{i+1,j+1} \end{array}$ of the frieze pattern there is an even number of positive entries (and hence also an even number of negative entries). 
So it suffices to show that all configurations with an odd number of positive entries can not occur.
By elementary arithmetic each of the configurations of signs
$$\begin{array}{cc} + & + \\ + & - \end{array}
~~~\mbox{\hskip0.2cm and\hskip0.2cm}~~~
\begin{array}{cc} - & + \\ + & + \end{array}
~~~\mbox{\hskip0.2cm and\hskip0.2cm}~~~ 
\begin{array}{cc} + & - \\ - & - \end{array}
~~~\mbox{\hskip0.2cm and\hskip0.2cm}~~~ 
\begin{array}{cc} - & - \\ - & + \end{array}
$$ 
would yield a negative number as determinant of a $2\times 2$-submatrix, contradicting the condition $c_{i,j}c_{i+1,j+1}-c_{i,j+1}c_{i+1,j}=1$ in Definition \ref{def:friezepattern}.
The remaining cases 
$$\begin{array}{cc} + & - \\ + & + \end{array}
~~~\mbox{\hskip0.2cm and\hskip0.2cm}~~~
\begin{array}{cc} + & + \\ - & + \end{array}
~~~\mbox{\hskip0.2cm and\hskip0.2cm}~~~ 
\begin{array}{cc} - & + \\ - & - \end{array}
~~~\mbox{\hskip0.2cm and\hskip0.2cm}~~~ 
\begin{array}{cc} - & - \\ + & - \end{array}
$$ 
give a positive determinant, but since $|c_{i,j}|> \frac{1}{\sqrt{2}}$ by assumption this determinant is strictly bigger than 1, again contradicting the condition in Definition \ref{def:friezepattern}.
\smallskip

\noindent
$(b)$ We proceed by induction on $\ell$. For $\ell=1$ we have
$$\varepsilon_{i,i+2}\varepsilon_{i+1,i+2}\varepsilon_{i,i+2}\varepsilon_{i+1,i+2}
=(\varepsilon_{i,i+2}\varepsilon_{i+1,i+2})^2 =1.
$$ 
Let $\ell>1$. By part $(a)$ we have
$$\varepsilon_{i,i+\ell}\varepsilon_{i+1,i+\ell}\varepsilon_{i,i+\ell+1}\varepsilon_{i+1,i+\ell+1}=1.
$$
Moreover, by induction hypothesis we can assume that
$$\varepsilon_{i,i+2}\varepsilon_{i+1,i+2}\varepsilon_{i,i+\ell}\varepsilon_{i+1,i+\ell}=1.
$$
Putting these equation together yields
$$\varepsilon_{i,i+2}\varepsilon_{i+1,i+2}\varepsilon_{i,i+\ell+1}\varepsilon_{i+1,i+\ell+1}=1.
$$

\noindent
$(c)$ Note that $\varepsilon_{i+1,i+2}=1$ since $c_{i+1,i+2}=1$ and similarly
$\varepsilon_{i,n+i+2}=1$ since $c_{i,n+i+2}=1$ (see Definition \ref{def:friezepattern}).  
Then part $(b)$ for $\ell=n+1$
yields
\begin{equation} \label{eq:epsilon}
\varepsilon_{i,i+2} = \varepsilon_{i+1,n+i+2}\mbox{\hskip0.3cm for all $i\in \mathbb{Z}$}.
\end{equation}
The frieze pattern $\mathcal{C}$ is non-zero and hence tame (see the remark after Definition \ref{def:friezepattern}). Every tame frieze pattern satisfies a glide symmetry, more precisely we have $c_{i,j}=c_{j,n+i+3}$ for all $i,j$ (a proof of this known fact can be found in \cite[Theorem 2.12]{CHJ20}). This implies that for all $i\in \mathbb{Z}$ we have
$c_{i-1,i+1} = c_{i+1,n+i+2}$.
Together with equation (\ref{eq:epsilon}) we deduce that
$$\varepsilon_{i-1,i+1} = \varepsilon_{i+1,n+i+2} = \varepsilon_{i,i+2}\mbox{\hskip0.2cm for all 
$i\in \mathbb{Z}$}
$$
and this proves the claim in part $(c)$.
\end{proof}

We now state the classification of non-zero frieze patterns over $\mathbb{Z}$, combining the classic result of Conway and Coxeter \cite{CC73} and the extension by Fontaine \cite{F14}.
With our above observations we can provide an independent and self-contained proof.
\smallskip

We recall a general construction on frieze patterns with odd height: multiplying the diagonal containing the quiddity sequence and then every second diagonal with $-1$ again gives a frieze pattern (this follows immediately from Definition \ref{def:friezepattern} since the determinant of every neighbouring $2\times 2$-submatrix is unchanged). 

\begin{Theorem}\label{thm:Fontaine}
Let $\mathcal{C}=(c_{i,j})$ be a non-zero frieze pattern over the rational integers. 
Then one of the following assertions hold.
\begin{enumerate}
\item[{(i)}] $\mathcal{C}$ is a Conway-Coxeter frieze pattern, that is, all entries are positive integers. 
\item[{(ii)}] The height of $\mathcal{C}$ is odd and $\mathcal{C}$ is obtained from a Conway-Coxeter frieze pattern by multiplying every second diagonal by $-1$.
\end{enumerate}
\end{Theorem}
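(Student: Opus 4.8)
The plan is to run everything through Lemma~\ref{lemma:signs}. Over $\mathbb{Z}$ every entry of a non-zero frieze pattern $\mathcal{C}=(c_{i,j})$ is a non-zero integer, so $|c_{i,j}|\ge 1>\frac{1}{\sqrt{2}}$ and the lemma applies with $a=\frac{1}{\sqrt{2}}$. By part~(c) there is a single sign $\delta\in\{1,-1\}$ with $\varepsilon_{i,i+2}=\delta$ for all $i$. Substituting this, together with $\varepsilon_{i+1,i+2}=1$, into part~(b) gives
\[
\varepsilon_{i,j}\,\varepsilon_{i+1,j}=\delta\qquad\text{for all }i\text{ and all }j\text{ with }i+2\le j\le n+i+2,
\]
i.e.\ reading down a fixed column $j$ of the array, the sign of the entry is multiplied by $\delta$ at each step from $c_{i,j}$ to $c_{i+1,j}$. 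Such a column begins and ends with the border entries $c_{j-n-2,\,j}=1$ and $c_{j-1,\,j}=1$, which are positive, and these two positions are joined by exactly $n+1$ of the above steps.

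First I would dispose of the case $\delta=1$. Then each column has constant sign, and since its top entry is the positive border value, $\varepsilon_{i,j}=1$ for every entry. Hence $\mathcal{C}$ has only positive integer entries, so it is a Conway--Coxeter frieze pattern: this is case~(i).

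Now the case $\delta=-1$. Here the signs alternate along each column, so for the two positive border entries at the ends of column $j$ to be compatible, the number $n+1$ of steps between them must be even; therefore the height $n$ is odd. Solving the recursion $\varepsilon_{i,j}=-\varepsilon_{i+1,j}$ with the boundary value $\varepsilon_{j-1,j}=1$ gives $\varepsilon_{i,j}=(-1)^{\,j-i+1}$ for every entry; in particular the border diagonals $j-i=1$ and $j-i=n+2$ (the latter odd, as $n$ is odd) keep sign $1$, as they must. This is exactly the sign pattern obtained by multiplying the quiddity diagonal and every second subsequent diagonal of a frieze pattern by $-1$. I would then invoke the construction recalled just before the theorem: performing that multiplication is an involution on the set of frieze patterns which leaves every neighbouring $2\times2$-determinant unchanged. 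Applying it to $\mathcal{C}$ produces a frieze pattern $\mathcal{C}'$ over $\mathbb{Z}$ whose $(i,j)$-entry equals $(-1)^{\,j-i+1}c_{i,j}=|c_{i,j}|>0$; thus $\mathcal{C}'$ is a Conway--Coxeter frieze pattern and $\mathcal{C}$ arises from it as in case~(ii).

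I do not expect a genuine obstacle: the argument is essentially bookkeeping on top of Lemma~\ref{lemma:signs}. The one point that needs care is the parity step in the twisted case — one must check that the alternating-sign constraint down a column is consistent with both endpoints being positive precisely when the height is odd — together with the routine verification that the sign pattern $(-1)^{\,j-i+1}$ is the one realised by the diagonal-twisting construction.
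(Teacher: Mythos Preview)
Your argument is correct and follows the same blueprint as the paper's proof: both rest entirely on Lemma~\ref{lemma:signs}, split on the common sign $\delta$ of the quiddity entries, and in the twisted case read off the parity of $n$ and undo the sign pattern via the diagonal-multiplication involution. The only real difference is cosmetic: you run the sign recursion down a fixed column $j$ between the two border $1$'s, which handles the positive case directly and yields the parity of $n$ without the paper's separate appeals to \cite[Equation~(6.6)]{Cox71} and to glide symmetry.
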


\begin{proof}
From Lemma \ref{lemma:signs}\,$(c)$ we know that the entries $c_{i,i+2}$ in the quiddity sequence all have the same sign. If all  these quiddity entries are positive integers then it follows from \cite[Equation (6.6)]{Cox71} that all entries in the frieze pattern are positive integers. So $\mathcal{C}$ is a Conway-Coxeter frieze pattern. 

Now suppose that all quiddity entries $c_{i,i+2}$ are negative integers. By glide symmetry it follows that $c_{i,n+i+1} = c_{n+i+1,n+i+3}$ are negative integers for all $i\in \mathbb{Z}$ (where $n$ denotes the height of $\mathcal{C}$). On the other hand, $\varepsilon_{i,i+2}=-1$ by assumption, hence we deduce from Lemma \ref{lemma:signs}\,$(b)$ that
$$\varepsilon_{i,i+\ell+1} = - \varepsilon_{i+1,i+\ell+1}\mbox{\hskip0.2cm for all $i\in \mathbb{Z}$ and $1\le \ell\le n+1$}. $$
This implies that the signs on each diagonal in the frieze pattern are constant, namely all entries $c_{i,j}$ are positive if $j-i$ is odd and negative if $j-i$ is even. Since we have observed above that the numbers $c_{i,n+i+1}$ are negative we conclude that $n+i+1-i=n+1$ is even, thus the height $n$ of $\mathcal{C}$ is odd. 
We can therefore apply the standard construction on frieze patterns of odd height, multiplying every second diagonal by $-1$. Then we obtain a frieze pattern with all entries positive integers, that is, a Conway-Coxeter frieze pattern. This means that assertion $(ii)$ of the theorem holds for $\mathcal{C}$. 
\end{proof}

\section{Non-zero frieze patterns over rings of quadratic integers} 
\label{sec:finiteness}

In this section we study non-zero frieze patterns over rings of integers $\mathcal{O}_d$ in quadratic number fields $\mathbb{Q}(\sqrt{d})$, where $d\in \mathbb{Z}\setminus \{0,1\}$ is a square-free integer. 

It seems to be a subtle problem to describe all non-zero frieze patterns over the Gaussian integers $\mathcal{O}_{-1}=\mathbb{Z}[i]$ and the Eisenstein integers $\mathcal{O}_{-3}=\mathbb{Z}[\frac{1+\sqrt{-3}}{2}]$. 
As the main result of this section we will show that among the imaginary quadratic integers there are only finitely many such difficult cases. Apart from these few cases, all non-zero frieze patterns over imaginary quadratic integers are known. For the proof we shall need a different viewpoint on frieze patterns and some useful results on reductions of quiddity cycles. All this is based on certain $2\times 2$-matrices which play a fundamental role in the theory of frieze patterns. We briefly recall the necessary background here.
\smallskip
 
Let $K$ be a field. For any $c\in K$ we define the $2\times 2$-matrix
$$\eta(c)=\begin{pmatrix} c & -1  \\ 1 & 0 \end{pmatrix}. $$
Then a sequence $(c_{0,2},c_{1,3},\ldots,c_{n+2,n+4})$ of numbers is the quiddity cycle of a tame frieze pattern (of height $n$) if and only if 
$$\prod_{i=0}^{n+2} \eta(c_{i,i+2}) = \begin{pmatrix} -1 & 0 \\ 0 & -1 \end{pmatrix} $$
(for a proof of this known fact see for instance \cite[Proposition 2.4]{CH17}). In general, a sequence $(c_1,\ldots,c_m)\in \mathbb{C}^m$ is called a {\em quiddity cycle} if
$\prod_{i=1}^m \eta(c_i) = -\mathrm{id}$ is the negative of the identity matrix.
\smallskip

For later use we collect some results from \cite{CH17} stating that every quiddity cycle must contain some small entries and giving reduction formulae for quiddity cycles. 

\begin{Lemma}  \label{lem:small}
\begin{enumerate}
\item[{(a)}] 
Let $(c_1,\ldots,c_m)\in \mathbb{C}^m$ such that $\prod_{j=1}^m \eta(c_j)$ is a scalar multiple of the identity matrix.
Then there are two different indices $j,k\in\{1,\ldots,m\}$ with $|c_j|<2$ and $|c_k|<2$.
\item[{(b)}] 
For all $a,b\in \mathbb{C}$ we have
\begin{enumerate}
\item[{(i)}] 
$\eta(a)\eta(1)\eta(b) = \eta(a-1)\eta(b-1)$
\item[{(ii)}] $\eta(a)\eta(-1)\eta(b) = -\eta(a+1)\eta(b+1)$.
\item[{(iii)}] $\eta(a)\eta(0)\eta(b) = -\eta(a+b)$.
\end{enumerate}
\item[{(c)}] 
Let $(c_1,\ldots,c_m)\in \mathbb{C}^m$ be a quiddity cycle with $m>3$.
Then there are two indices $j,k\in\{1,\ldots,m\}$ with $|j-k|>1$ and $\{j,k\}\ne\{1,m\}$ such that $|c_j|<2$ and $|c_k|<2$.
\end{enumerate}
\end{Lemma}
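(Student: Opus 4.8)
The plan is to verify (b) by a direct matrix computation and to deduce both (a) and (c) from a single elementary growth estimate for products of the matrices $\eta(\cdot)$ all of whose arguments have modulus at least $2$. For (b) one just multiplies out the relevant products; for instance $\eta(a)\eta(1)\eta(b)=\left(\begin{smallmatrix}(a-1)(b-1)-1 & -(a-1)\\ b-1 & -1\end{smallmatrix}\right)=\eta(a-1)\eta(b-1)$, and (ii), (iii) are checked the same way, so I would simply record these.

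The key tool is the following \emph{growth estimate}: if $d_1,\dots,d_t\in\mathbb{C}$ satisfy $|d_i|\ge 2$ for all $i$ and $z_0,z_1\in\mathbb{C}$ satisfy $|z_1|>|z_0|$, then the sequence with $z_{k+1}=d_kz_k-z_{k-1}$ obeys $|z_{t+1}|>|z_t|>\dots>|z_1|$; this is immediate by induction from $|z_{k+1}|\ge 2|z_k|-|z_{k-1}|>|z_k|$. Since $\eta(d_t)\cdots\eta(d_1)\binom{z_1}{z_0}=\binom{z_{t+1}}{z_t}$, such a product strictly enlarges the modulus of the first coordinate of its input vector; in particular, applied to a vector with $|z_1|>|z_0|=0$ it never produces a vector whose first coordinate vanishes.

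For (a): if at most one of the $c_i$ had modulus $<2$, then after a cyclic rotation of the sequence — which conjugates the product $\eta(c_1)\cdots\eta(c_m)$ and hence preserves being a scalar multiple of the identity — we may assume $|c_i|\ge 2$ for $i=1,\dots,m-1$. Feeding $\binom{0}{1}$ into $\eta(c_1)\cdots\eta(c_m)$ and using $\eta(c_m)\binom{0}{1}=\binom{-1}{0}$, the output equals $\eta(c_1)\cdots\eta(c_{m-1})\binom{-1}{0}$, whose first coordinate has modulus $>1$ by the growth estimate (take $z_1=-1$, $z_0=0$ and $d_i=c_{m-i}$). But a scalar multiple of the identity sends $\binom{0}{1}$ to a vector with first coordinate $0$, a contradiction. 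The point of the vector $\binom{0}{1}$ is that it ``forgets'' the single entry $c_m$ that might be small.

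For (c): assume for contradiction that no two small entries are at cyclic distance $>1$, i.e. any distinct small indices $j,k$ satisfy $|j-k|=1$ or $\{j,k\}=\{1,m\}$. Two adjacent vertices of the $m$-cycle have no common neighbour when $m\ge 4$, so the set of small indices then has at most two elements, and together with (a) it has exactly two, occupying cyclically adjacent positions; after a rotation we may assume $|c_1|,|c_2|<2$ and $|c_i|\ge 2$ for $3\le i\le m$. From $\eta(c_1)\eta(c_2)\cdots\eta(c_m)=-\mathrm{id}$ we get $M:=\eta(c_3)\cdots\eta(c_m)=-(\eta(c_1)\eta(c_2))^{-1}=\left(\begin{smallmatrix}1 & -c_1\\ c_2 & 1-c_1c_2\end{smallmatrix}\right)$, so the $(1,1)$-entry of $M$ equals $1$. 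On the other hand $M$ is a product of $m-2\ge 2$ matrices $\eta(c_i)$ with $|c_i|\ge 2$, so by the growth estimate applied to $M\binom{1}{0}$ (take $z_1=1$, $z_0=0$) the first coordinate of $M\binom{1}{0}$ has modulus $>1$; but that first coordinate is precisely the $(1,1)$-entry of $M$, which we computed to be $1$ — contradiction. The main obstacle I anticipate is purely organisational: keeping the order of the matrix product, the cyclic rotations, and the choice of starting vector consistent so that exactly the uncontrolled entries (one in (a), the two small ones in (c)) are annihilated by the chosen vector; the growth estimate and the remark on cycles are routine.
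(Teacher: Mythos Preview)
Your argument is correct. Part (b) is a routine computation, and the growth estimate you state is sound: from $|z_{k+1}|\ge |d_k|\,|z_k|-|z_{k-1}|\ge 2|z_k|-|z_{k-1}|>|z_k|$ the strict increase follows by induction. In (a) the rotation is legitimate since conjugation preserves the property of being a scalar matrix, and feeding in $\binom{0}{1}$ indeed kills the single possibly small factor $\eta(c_m)$ while the remaining product enlarges the first coordinate past $0$. In (c) the pigeonhole step (three pairwise cyclically adjacent indices would force a common neighbour of two adjacent vertices, impossible for $m\ge 4$) is fine, and the computation $-(\eta(c_1)\eta(c_2))^{-1}=\left(\begin{smallmatrix}1 & -c_1\\ c_2 & 1-c_1c_2\end{smallmatrix}\right)$ pins down the $(1,1)$-entry of $M$ as $1$, which the growth estimate (applied with $z_1=1$, $z_0=0$ and $t=m-2\ge 2$) forces to have modulus strictly greater than $1$.

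By contrast, the paper does not prove the lemma from scratch: parts (a) and (b) are quoted from \cite[Corollary~3.3 and Proposition~4.1]{CH17}, and part (c) is taken from \cite[Corollary~6.3]{CH17}, where the original statement is over $\mathbb{Z}$ and the paper patches the case $m=4$ separately by invoking the explicit form $(c_1,\tfrac{2}{c_1},c_1,\tfrac{2}{c_1})$ of all length-$4$ quiddity cycles. Your growth argument is essentially the mechanism that underlies the cited results, so the two approaches are not conceptually far apart; what your write-up buys is self-containment and, in (c), a uniform treatment that avoids the ad~hoc analysis of $m=4$.
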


\begin{proof}
The statements in parts $(a)$ and $(b)$ are \cite[Corollary 3.3]{CH17} and \cite[Proposition 4.1]{CH17}, respectively. The assertion in part $(c)$ is stated in \cite[Corollary 6.3]{CH17} for quiddity cycles over $\mathbb{Z}$. However, the integrality assumption is only used in the proof for dealing with the case $m=4$. The argument for $m=4$ can easily be generalized to arbitrary quiddity cycles over $\mathbb{C}$. Namely, every such quiddity cycle is of the form $(c_1,\frac{2}{c_1},c_1,\frac{2}{c_1})$; if $|c_1|<2$, we are done, and if $|c_1|\ge 2$ then $|\frac{2}{c_1}|\le 1<2$ and we are also done. 
\end{proof}

We now state the main result of this section.

\begin{Theorem} \label{thm:imaginary}
Let $d$ be a negative square-free rational integer. Then
\[ \mathcal{O}_d^{\circ} = \begin{cases}
 \mathcal{O}_d & \text{if } d\in \{-1,-2,-3,-7,-11\} \\
 \mathbb{Z} & \text{else}.
\end{cases} \]
\end{Theorem}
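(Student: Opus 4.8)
\emph{Overview.} The plan is to prove the two halves of the statement separately: for the five exceptional values $d$ one exhibits explicit frieze patterns, while for all other $d$ one shows by a reduction argument that every non-zero frieze pattern over $\mathcal{O}_d$ is already defined over $\mathbb{Z}$.

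\emph{The exceptional cases $d\in\{-1,-2,-3,-7,-11\}$.} For each such $d$ it suffices to exhibit one non-zero frieze pattern over $\mathcal{O}_d$ having a $\mathbb{Z}$-algebra generator of $\mathcal{O}_d=\mathbb{Z}[\mu]$ among its entries; since $\mathbb{Z}\subseteq\mathcal{O}_d^{\circ}\subseteq\mathcal{O}_d$ this forces $\mathcal{O}_d^{\circ}=\mathcal{O}_d$. For $d\in\{-1,-2,-3,-7\}$ the number $2$ has a divisor $\mu$ in $\mathcal{O}_d$ with $\mathbb{Z}[\mu]=\mathcal{O}_d$ (one can take $\mu=1+\ii$, $\sqrt{-2}$, $\tfrac{1+\sqrt{-3}}{2}$, $\tfrac{1+\sqrt{-7}}{2}$ respectively), and then $(\mu,2/\mu,\mu,2/\mu)$ is the quiddity cycle of a non-zero height-one frieze pattern over $\mathcal{O}_d$: indeed $\eta(\mu)\eta(2/\mu)$ has determinant $1$ and trace $\mu\cdot(2/\mu)-2=0$, so its square is $-\id$, and $\mu$ occurs as an entry. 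For $d=-11$, where $2$ is inert and has no divisor in $\mathcal{O}_{-11}$ outside $\mathbb{Z}$, I would instead use the height-three pattern with quiddity cycle $(\mu,\bar\mu,\mu,\bar\mu,\mu,\bar\mu)$ for $\mu=\tfrac{1+\sqrt{-11}}{2}$; here $\eta(\mu)\eta(\bar\mu)$ has determinant $1$ and trace $N(\mu)-2=1$, hence cube $-\id$, and $\mathbb{Z}[\mu]=\mathcal{O}_{-11}$.

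\emph{The remaining $d$.} Fix a negative square-free $d\notin\{-1,-2,-3,-7,-11\}$. Every entry of a tame frieze pattern is a polynomial with integer coefficients in its quiddity entries (the continuant formula), and the quiddity entries are themselves entries, so $\mathcal{O}_d^{\circ}$ is generated as a ring by the quiddity-cycle entries of all non-zero frieze patterns over $\mathcal{O}_d$; as $\mathbb{Z}\subseteq\mathcal{O}_d^{\circ}$ always, it is enough to show every such quiddity cycle consists of rational integers. The first ingredient is a short norm computation: for $d$ outside the exceptional list the only elements of $\mathcal{O}_d$ of absolute value $<2$ are $0,1,-1$. Writing $z=a+b\sqrt d$ (resp.\ $z=\tfrac{a+b\sqrt d}{2}$ with $a\equiv b\bmod 2$ when $d\equiv 1\bmod 4$), if $b\neq 0$ then $|z|^2\ge|d|$ (resp.\ $4|z|^2=a^2+b^2|d|\ge|d|$), which — since $|d|\ge 5$ (resp.\ $|d|\ge 15$, with $d=-15$ excluded by the congruence $a\equiv b\bmod 2$) — contradicts $|z|<2$; hence $b=0$ and $z\in\{0,\pm1\}$. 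The second ingredient is Lemma~\ref{lem:small}, used to induct on the length $m$ of the quiddity cycle. For $m=4$ the cycle has the form $(c,2/c,c,2/c)$, so $c$ divides $2$ in $\mathcal{O}_d$; as $\mathcal{O}_d$ has no element of norm $2$ and the only divisors of $2$ of norm $4$ are $\pm2$, we get $c\in\{\pm1,\pm2\}$ and the cycle is integral. For $m\ge 5$, Lemma~\ref{lem:small}(a) (or (c)) produces a quiddity entry $c_j$ with $|c_j|<2$; by the norm computation $c_j\in\{0,\pm1\}$, and being the quiddity entry of a non-zero frieze pattern it is non-zero, so $c_j=\pm1$; then Lemma~\ref{lem:small}(b)(i)/(ii) rewrites $(\dots,c_{j-1},\pm1,c_{j+1},\dots)$ as the shorter quiddity cycle $(\dots,c_{j-1}\mp1,c_{j+1}\mp1,\dots)$, which by induction is integral, whence $c_{j-1},c_{j+1}$, all the other entries, and thus the original cycle are rational integers.

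\emph{The main obstacle.} The delicate point is that a reduction step as above could a priori create a zero entry — exactly when a neighbour $c_{j\pm1}$ again equals $\pm1$ with the same sign as $c_j$ — and, unlike for non-zero frieze patterns, quiddity cycles that do contain zeros need not be integral (for example $(1,1,\sqrt d,-1,-1,-\sqrt d)$ has $\prod_i\eta(c_i)=-\id$ and non-zero entries but is not integral; its associated frieze pattern is of course not non-zero, since $c_{i,i+3}=c_{i,i+2}c_{i+1,i+3}-1=0$ there). Resolving this is the technical heart of the argument and amounts to carrying the hypothesis that the frieze pattern is non-zero all the way through the induction: in a non-zero frieze pattern over $\mathcal{O}_d$ two consecutive quiddity entries can never both equal $1$ or both equal $-1$, since that would force $c_{i,i+3}=0$, so the reduced neighbours $c_{j\pm1}\mp1$ are automatically non-zero; and one needs the standard fact (in the spirit of the reduction calculus of \cite{CH17}) that reducing a non-zero frieze pattern at an ``ear'' — a quiddity entry equal to $\pm1$ — yields again a non-zero frieze pattern, of height one less. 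A little extra bookkeeping is needed because reducing at $-1$ via Lemma~\ref{lem:small}(b)(ii) turns $\prod_i\eta(c_i)=-\id$ into $+\id$ (the reduced pattern is a twisted one), which one absorbs by phrasing the induction for cycles with $\prod_i\eta(c_i)=\pm\id$. Apart from this, everything reduces to the two short computations above.
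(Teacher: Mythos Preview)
Your proof is correct and follows essentially the same strategy as the paper: identical explicit quiddity cycles for the five exceptional values (height~$1$ for $d\in\{-1,-2,-3,-7\}$, height~$3$ for $d=-11$), and for the remaining $d$ the same two ingredients---the norm computation showing that $\{z\in\mathcal{O}_d:|z|<2\}=\{0,\pm1\}$, and the reduction calculus of Lemma~\ref{lem:small}. The one organisational difference is how the induction is carried: the paper reduces an \emph{arbitrary} quiddity cycle over $\mathcal{O}_d$ (allowing zeros to appear en route, so that all three moves in Lemma~\ref{lem:small}(b) may be used, in particular removing two non-neighbouring entries from $\{-1,0\}$ simultaneously so that the signs cancel) down to $(0,0)$ or $(1,1,1)$ and then argues by reversal that the original entries were already integers; you instead keep the non-zero-frieze hypothesis alive at every step via the ear-removal fact, so that zeros never enter and only the $\pm1$-moves are needed. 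Your route trades the paper's pair-removal/reversal step for the bookkeeping you flag (the $\prod\eta(c_i)=\pm\id$ dichotomy and the claim that removing an ear at $\pm1$ again yields a non-zero pattern---true because the reduced entries are, up to sign, a subset of the old ones). Either packaging works; the mathematical content is the same.
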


\begin{Remark}
Note that the above theorem states that for any negative square-free integer $d\not\in \{-1,-2,-3,-7,-11\}$ the only non-zero frieze patterns over $\mathcal{O}_d$ are the Conway-Coxeter frieze patterns and the related twisted Conway-Coxeter frieze patterns (appearing in Theorem \ref{thm:Fontaine}). For the values $d\in \{-1,-2,-3,-7,-11\}$ there exist non-integral frieze patterns as shown in the subsequent proof.
\end{Remark}

\begin{figure}[h]
\centering
\begin{tabular}{c|l}
$d$ & quiddity cycle \\
\hline
$-1$ & $(1+\omega_{-1},1-\omega_{-1},1+\omega_{-1},1-\omega_{-1})$ \\
$-2$ & $(\omega_{-2},\overline{\omega_{-2}},\omega_{-2},\overline{\omega_{-2}})$ \\
$-3$ & $(\omega_{-3},2\overline{\omega_{-3}},\omega_{-3},2\overline{\omega_{-3}})$ \\
$-7$ & $(\omega_{-7},\overline{\omega_{-7}},\omega_{-7},\overline{\omega_{-7}})$ \\
$-11$ & $(\omega_{-11},\overline{\omega_{-11}},\omega_{-11},\overline{\omega_{-11}},\omega_{-11},\overline{\omega_{-11}})$ \\
\end{tabular}
\caption{Certain quiddity cycles in $\mathcal{O}_d$}
\label{fig:quid}
\end{figure}

\begin{proof} The proof consists of two parts.
First, for $d\in \{-1,-2,-3,-7,-11\}$, we present explicit non-integral frieze patterns over
$\mathcal{O}_d$, showing that $\mathcal{O}_d^{\circ}=\mathcal{O}_d$.
Secondly, we prove that when $d\not\in \{-1,-2,-3,-7,-11\}$ all entries in all non-zero frieze patterns over $\mathcal{O}_d$ are rational integers.
\smallskip

Recall that $\mathcal{O}_d=\mathbb{Z}[\omega_d]$ where
\[ \omega_d := \begin{cases}
\sqrt{d} & \text{ if } d \equiv 2,3 \:\:(\md 4), \\
\frac{1+\sqrt{d}}{2} & \text{ if } d \equiv 1 \:\:(\md 4). \\
\end{cases} \]
Figure \ref{fig:quid} contains quiddity cycles of non-zero frieze patterns for $d\in \{-1,-2,-3,-7,-11\}$.
In each of these cases we can thus conclude that $\mathcal{O}_{d}^{\circ}=\mathcal{O}_{d}$. For example, for $d=-11$, with $\omega=\omega_{-11}$ we have $\omega\overline{\omega}=3$ and we obtain the frieze pattern:
$$\begin{array}{cccccccccccc}
 &  & \ddots & &  & &  & & & & & \\
0 & 1 & \omega & 2 & \omega & 1 & 0 & & & & & \\
& 0 & 1 & \overline{\omega} & 2 & \overline{\omega} & 1 & 0 & & & & \\
& & 0 & 1 & \omega & 2 & \omega & 1 & 0 & & &  \\
& & & 0 & 1 & \overline{\omega} & 2 & \overline{\omega} & 1 & 0 & & \\
& & & & 0 & 1 & \omega & 2 & \omega & 1 & 0 & \\
& & & & & 0 & 1 & \overline{\omega} & 2 & \overline{\omega} & 1 & 0 \\
 &  & & & & &  & & &\ddots & & \\
\end{array}
$$
This completes the first part of the proof.
\smallskip

For the second part of the proof we now suppose that $d\not\in \{-1,-2,-3,-7,-11\}$.
Let $\mathcal{C}=(c_{i,j})$ be a non-zero frieze pattern over $\mathcal{O}_d$. We consider the corresponding quiddity cycle $(c_{0,2},\ldots,c_{n+2,n+4})\in (\mathcal{O}_d)^{n+3}$ (where $n$ is the height of $\mathcal{C}$). So we have $\prod_{i=0}^{n+2} \eta(c_{i,i+2}) = -\mathrm{id}$ (see the remark before Lemma \ref{lem:small}). 

We first show that every quiddity cycle $(c_1,\ldots,c_m)$ over $\mathcal{O}_d$ can be reduced to one of the quiddity cycles $(0,0)$, $(1,1,1)$, by applying transformations as in Lemma \ref{lem:small}\,(b).
We proceed by induction on $m$, the length of the quiddity cycle. For $m=2$ and $m=3$ the statement clearly holds since the only quiddity cycle of length 2 is $(0,0)$ 
and the only quiddity cycle of length 3 is $(1,1,1)$ (see \cite[Example 2.7]{CH17}). So let $m\ge 4$. 
By Lemma \ref{lem:small}\,$(a)$ there are two different entries $c_i,c_j$ in the quiddity cycle with $|c_i|<2$ and $|c_j|<2$,
and by Lemma \ref{lem:small}\,$(c)$ these entries can assumed to be non-neighbouring since $m\ge 4$.

The crucial observation is that for $d\not\in \{-1,-2,-3,-7,-11\}$ the only elements in $\mathcal{O}_d$ with absolute value $<2$ are $0$ and $\pm 1$ (this requires some elementary computations which we leave to the reader, and it is not true for $d\in \{-1,-2,-3,-7,-11\}$). This means that in our quiddity cycle $(c_1,\ldots,c_m)$
there are two different non-neighbouring entries equal to $1$, $-1$ or $0$.

If there is a 1 in the quiddity cycle we can remove it by Lemma \ref{lem:small}\,$(b)(i)$ and obtain a quiddity cycle of shorter length $m-1$ to which we apply induction. If there is no 1 in the quiddity cycle, then there must be two non-neighbouring entries equal to $-1$ or $0$. We remove both entries by the transformations in Lemma \ref{lem:small}\,$(b)(ii),(iii)$ and obtain a quiddity cycle of shorter length (note that the signs appearing in these transformations cancel since we apply two transformations involving
a sign) to which we can apply induction. Altogether, inductively we obtain a quiddity cycle of length 2 or 3, that is, we can reduce our initial quiddity cycle $(c_1,\ldots,c_m)$ to $(0,0)$ or $(1,1,1)$.

Now we come back to the quiddity cycle $(c_{0,2},\ldots,c_{n+2,n+4})$ of the frieze pattern $\mathcal{C}$. In particular, the resulting quiddity cycle after the above inductive reduction process has integral entries\footnote{Note that we do not know at this point that the cycles obtained in this process in fact correspond to non-zero frieze patterns because they could a priori contain zeros.}. Reversing this process means that our original quiddity cycle of $\mathcal{C}$ can be obtained from $(0,0)$ or $(1,1,1)$ by transformations in Lemma \ref{lem:small}\,$(b)$; but the new entries resulting from these transformations are still integral (adding/subtracting 1 or adding two entries). 
Therefore, the original quiddity cycle of $\mathcal{C}$ has integral entries and then the rule defining frieze patterns (see Definition \ref{def:friezepattern}) implies that all entries of the frieze pattern $\mathcal{C}$ are rational numbers. However, it is well-known for rings of integers of number fields that $\mathcal{O}_d\cap \mathbb{Q} = \mathbb{Z}$. Thus, all entries of the frieze pattern $\mathcal{C}$ are (non-zero) integers. From Theorem \ref{thm:Fontaine} we can then conclude that $\mathcal{C}$ is a Conway-Coxeter frieze pattern or a twisted Conway-Coxeter pattern and this completes the proof of the theorem. 
\end{proof} 

\begin{Remark}
\begin{enumerate}
\item[{(i)}]
The rings of integers $\mathcal{O}_d$ for $d\in\{-1,-2,-3,-7,-11\}$ are all unique factorization domains (UFDs). This is part of the Baker-Heegner-Stark theorem stating that 
for squarefree $d<0$, the ring of integers $\mathcal{O}_d$ of the imaginary quadratic number field $\mathbb{Q}(\sqrt{d})$ is a UFD if and only if 
$$d\in \{-1,-2,-3,-7,-11,-19,-43,-67,-163\}$$
(thus proving the class number problem for the case of class number 1). 
\item[{(ii)}] Among the imaginary quadratic integers, the cases $d\in\{-1,-2,-3,-7,-11\}$ are precisely the ones where $\mathcal{O}_d$ is a Euclidean ring (with respect to the usual norm function); see \cite[Proposition 4.1]{Lem04}.
\end{enumerate}
\end{Remark}

\end{document}